\newtheorem{lemma}{Lemma}
\newtheorem{proposition}{Proposition}
\newcounter{aqctr}
\newenvironment{author-query}
{\refstepcounter{aqctr}\par\vspace{\baselineskip}\noindent
\color{red}\textbf{Author Query/Comment AQ \arabic{aqctr}.}}
{\par\vspace{\baselineskip}\normalcolor}
\newcommand{\symbf}[1]{\bm{#1}}
\newcommand{\increment}{\Delta}
\title{Global Optimality in Multi-Flyby Asteroid Trajectory Optimization: Theory and Application Techniques}
\author{Zhong Zhang\footnote{Postdoctoral Researcher, School of Aerospace Engineering; Department of Aerospace Science and Technology, zhong.zhang@polimi.it.} } %, 
\affil{Tsinghua University, 100084 Beijing, People's Republic of China; Politecnico di Milano, 20156 Milan, Italy}
\author{Xiang Guo \footnote{Lecturer, College of Aerospace Science and Engineering; State Key Laboratory of Space System Operation and Control, gx9645@126.com}} %, 
\affil{National University of Defense Technology, 410073 Changsha, People’s Republic of China}
\author{Di Wu\footnote{Associate Professor, School of Astronautics, wudi2025@buaa.edu.cn, Member AIAA.}}
\affil{Beihang University, Beijing, 100191, People’s Republic of China}
\author{Hexi Baoyin\footnote{Professor, School of Aerospace Engineering, baoyin@tsinghua.edu.cn. Senior Member AIAA}}
\author{Junfeng Li\footnote{Professor, School of Aerospace Engineering; lijunf@tsinghua.edu.cn. Senior Member AIAA (Corresponding Author)}}
\affil{Tsinghua University, 100084 Beijing, People's Republic of China}
\author{Francesco Topputo\footnote{Professor, Department of Aerospace Science and Technology, francesco.topputo@polimi.it, Senior Member AIAA}}
\affil{Politecnico di Milano, 20156 Milan, Italy}
\begin{document}
\maketitle
\vspace{-1em}   
\begin{abstract}
	Designing optimal trajectories for multi-flyby asteroid missions is scientifically critical but technically challenging due to nonlinear dynamics, intermediate constraints, and numerous local optima.
	This paper establishes a method that approaches global optimality for multi-flyby trajectory optimization under a given sequence. The original optimal control problem with interior-point equality constraints is transformed into a multi-stage decision formulation. This reformulation enables direct application of dynamic programming in lower dimensions, and follows Bellman’s principle of optimality. Moreover, the method provides a quantifiable bound on global optima errors introduced by discretization and approximation assumptions, thus ensuring a measure of confidence in the obtained solution.
	The method accommodates both impulsive and low-thrust maneuver schemes in rendezvous and flyby scenarios. Several computational techniques are introduced to enhance efficiency, including a specialized solution for bi-impulse cases and an adaptive step refinement strategy.
	The proposed method is validated through three problems: 1) an impulsive variant of the fourth Global Trajectory Optimization competition problem (GTOC4), 2) the GTOC11 problem, and 3) the original low-thrust GTOC4 problem. Each case demonstrates improvements in fuel consumption over the best-known trajectories. These results give evidence of the generality and effectiveness of the proposed method in global trajectory optimization.
\end{abstract}

\section*{Nomenclature}
 {\renewcommand\arraystretch{1.0}
  \noindent\begin{longtable*}{@{}l @{\quad=\quad} l@{}}
	  $a$ &  semimajor axis \\
	  $\bm d$ & decision variable\\
	  $\mathscr{D}$ & admissible state set\\
	  $e$ &   eccentricity \\
	  $g_0$ & sea-level standard acceleration due to gravity \\
	  $g_k$ & optimal cost for the $k$-th stage\\
	  $\tilde{g}_k$ & approximated optimal cost for the $k$-th stage\\
	  $i$ &  inclination angle \\
	  $I_{\rm sp}$ & specific impulse\\
	  $J$ &  objective function \\
	  $J^*$ &  optimal objective function value\\

	  $m$ & spacecraft mass\\
	  $m_{0}$ &  spacecraft initial mass \\
	  $m_{\rm fuel}$ &  propellant mass\\
	  $m_{f}$ & spacecraft final mass\\
	  $N$ & number of flyby bodies\\
	  $N_m$ & number of discrete masses\\
	  $N_s$ & total number of states\\
	  $N_t$ & number of discrete times\\
	  $N_v$ & number of discrete velocities\\

	  $P_0$ & original optimal control problem\\
	  $P_1$ & multi-stage decision problem\\
	  $P_2$ & discretized and approximated multi-stage decision problem\\
	  $P_3$ & stochastic multi-stage decision problem\\

	  $\bm R$ & position vector of flyby body\\
	  $\bm r$ & spacecraft position vector\\

	  $\bm s$ & state variable\\
	  $\mathscr{S}$ & admissible state set\\

	  $\bm u$ & control vector \\

	  $\bm V$ & velocity vector of flyby body\\
	  $\bm v$ & spacecraft velocity vector\\

	  $\Delta v$ & velocity increment\\
	  $\Delta v_{\rm leg}$ & velocity increment of each leg\\
	  $\varepsilon$ & prediction error\\
	  $\mu$ &  gravitational parameter \\
	  $\symbf{\mu}$ &  control policy\\
	  $\mathscr{U}$ &  admissible policy set\\
	  \multicolumn{2}{@{}l}{Subscripts}\\
	  $k$ & $k$-th flyby \\
  \end{longtable*}}

\section{Introduction}

Asteroid exploration has attracted much attention due to its scientific importance and potential for resource extraction. Only a few dozen among millions of known asteroids have been visited by spacecraft. Missions targeting the exploration of multiple asteroids have thus become a focal point due to their ability to achieve greater scientific impact with lower costs. NASA's Lucy mission is set to investigate several Trojan asteroids \cite{levison2021lucy}. During its encounter with asteroid 152830 Dinkinesh, Lucy made the unexpected discovery of a binary structure \cite{Levison2024}. Studies have shown that JAXA's DESTINY+ could extend its mission to include flybys of five additional asteroids \cite{Ozaki2022}, following its primary objective of exploring asteroid 3200 Phaethon. Other notable asteroid missions, such as ESA's Hera \cite{michelESAHeraMission2022,ferrariPreliminaryMissionProfile2021}, ESA's RAMSES\cite{morelliInitialTrajectoryAssessment2024}, JAXA's Hayabusa-2 \cite{watanabe2019hayabusa2} and NASA's OSIRIS-REx \cite{scheeres2019dynamic}, have consistently yielded groundbreaking findings, further driving enthusiasm and momentum in the field of asteroid exploration.

Optimizing multiple asteroid-linking trajectories is a complex and demanding task in interplanetary mission design. It involves scenarios with both multi-flyby and multi-rendezvous. The optimization process typically consists of two steps: first, identifying feasible asteroid exploration sequences through global optimization; second, refining trajectories using local optimization under a given sequence. The first step involves using global optimization algorithms to  determine a feasible sequence of asteroids \cite{Viavattene2022,DICARLO20182026,7850107}. In this phase, the selection of candidate asteroids prioritizes computational efficiency over model fidelity, aiming to rapidly narrow down the vast search space. The optimization problem is inherently a complex combinatorial challenge featuring nonlinear time-dependent dynamics. To tackle such problems, heuristic algorithms such as genetic algorithms \cite{sanchez2016asteroid, izzo2015evolving}, ant colony optimization \cite{Hong-Xin2018}, and tree search \cite{9623484, Izzo2019_chapter} are commonly employed. The second step involves solving a nonlinear optimal control problem to refine the trajectory for a predefined asteroid visit sequence. This step typically relies on either direct methods \cite{fahroo2002direct, Ozaki2017,Lantoine2012,Aziz2019,doi:10.2514/1.G008107} or indirect methods \cite{Russell2007,doi:10.2514/1.G000379,doi:10.2514/6.2004-5088}, which are chosen based on the specific requirements of the mission. This paper considers the second step, with a focus on designing fuel-optimal trajectories for a given flyby sequence.

Benchmark problems for multi-asteroid trajectories have also been developed as essential tools for evaluating optimization algorithms. Out of the 12 Global Trajectory Optimization Competitions (GTOCs) held to date\footnote{Izzo, D., “GTOC Website”  \url{https://sophia.estec.esa.int/gtoc\_portal/} [retrieved 1 December 2024]}, seven have involved asteroid-related missions \cite{li2018review,SHEN2023889}, all of which included multi-flyby or multi-rendezvous scenarios. These competitions highlight the inherent complexity of trajectory optimization for such missions. The availability of diverse benchmark problems provides researchers with a reliable framework for comparing optimization algorithms. This allows direct benchmarking against state-of-the-art methods without requiring detailed knowledge of their implementation. Moreover, it facilitates the straightforward validation of new algorithms by other researchers.

For the multi-flyby trajectory design problem, D'Amario et al. \cite{DAmario1981} investigated the direct method for impulsive multi-flyby problems and computed analytical derivatives. Subsequently, Olympio et al. \cite{Olympio:2011:Optimal} studied the indirect method for multi-flyby low-thrust trajectory optimization problems. Wang et al. \cite{wangAnalyticGradientsNormalized2024} derived analytical gradients for interior point constraints, which improved the efficiency of solving low-thrust optimal control problems.
Broadly speaking, the direct method discretizes the problem into a parameter optimization problem. Its primary advantage lies in its ability to handle many intermediate constraints. However, it suffers from limited control precision and can result in a high-dimensional optimization problem that remains computationally challenging. The indirect method, in contrast, addresses a low-dimensional nonlinear differential equation problem. However, it is highly sensitive to the initial guesses for the costate variables, making convergence difficult. With advancements in machine learning, neural networks have also been proposed to approximate solutions to these problems \cite{Viavattene2022, Ozaki2022} using training data derived from direct or indirect methods.

The methods above are typically limited to finding local optima. Previous studies have demonstrated the existence of numerous local optima, illustrating the inherent complexity of the problem \cite{Russell2007,doi:10.2514/1.G007138}. This finding highlights the critical importance of global optimization. Transforming the problem into a low-dimensional formulation and leveraging modern computational power to approximate exhaustive searches for a global optimum has become an important and challenging field of research. Zhang et al. \cite{Zhang2024} explored the application of dynamic programming to impulsive multi-rendezvous problems. Independently, Bellome et al. \cite{Bellome2023, Bellome2024} proposed a dynamic programming method for bi-impulse solutions in multi-flyby problems. Several critical challenges remain to be addressed, including the development of a more general and rigorous algorithmic proof of global optimality that unifies impulsive and low-thrust maneuvers, as well as multi-flyby and multi-rendezvous problems. Furthermore, previous studies have inevitably employed discretization to transform continuous variables, such as flyby epochs, into discrete counterparts, introducing errors. Establishing a theoretical bound for such errors is another significant theoretical challenge.

This paper aims to develop a rigorous and general global optimization method to address the challenges of multi-flyby trajectory optimization. The main contributions are summarized as follows.
\begin{itemize}
	\item On the theoretical side, a problem reformulation is established which integrates multi-stage decision-making with dynamic programming methods. This method rigorously proves global optimality for both impulsive and low-thrust maneuvers in rendezvous and flyby scenarios. Additionally, the relationship between the globally optimal solutions of the discretized approximation and the original continuous problem is thoroughly analyzed, resulting in rigorously derived error bounds for the first time.
	\item From a practical perspective, this paper introduces a specialized solution for pre-designed bi-impulse problems to significantly enhance computational efficiency. Furthermore, an adaptive step technique is proposed to accelerate the solution process. Although the adaptive step technique does not theoretically guarantee global optimality, it performs excellently in practical test cases.
	\item The proposed method is applied to three benchmark problems: the GTOC4 problem, the GTOC4 impulsive variant, and the GTOC11 problem. In all cases, the method achieves substantial fuel savings compared to existing optimal solutions, highlighting its robustness and effectiveness in both impulsive and low-thrust mission scenarios. The open-source code accompanying this work provides a valuable tool for researchers, enabling its use in future research or as a benchmark for future studies.
\end{itemize}

The remainder of this paper is organized as follows: Section \ref{sec:modeling} introduces the modeling of the multi-flyby problem. Section \ref{sec:proof} details the proof of global optimality and the derivation of error bounds. Section \ref{sec:analysis_discussion} discusses global optimality and error bounds, examines the connections between low-thrust and impulsive schemes, as well as flyby and rendezvous scenarios, and proposes techniques to improve algorithmic efficiency. Section \ref{sec:examples} illustrates the effectiveness of the proposed framework by solving three GTOC problems with optimized solutions. Finally, Sec.~\ref{Sec:conclusion} summarizes the findings and contributions of the paper.

\section{Modeling of the Multi-Flyby Problem}
\label{sec:modeling}
This section considers the general formulation for optimizing multi-flyby trajectories under a given flyby sequence. This starts with an optimal control problem that includes interior-point equality constraints and is transformed into a multi-stage decision problem. After discretizing and approximating the problem, the tractable problem solved in this paper is obtained. 
% The problem reformulation leverages the Markov property of the multi-flyby problem, which states that only the velocity and mass of spacecraft at the flyby epoch influence subsequent designs.

\subsection{Optimal Control Problem $P_0$ with Interior-Point Equality Constraints}

This section defines the multi-flyby trajectory optimization problem for a spacecraft, which is an optimal control problem with interior point equality constraints, hereinafter referred to as problem $P_0$:

% Suppose the spacecraft has $N+1$ flyby events, starting from the first flyby body, i.e. the departure body, index set to 0,  and ending at the last flyby body, index set to N, with the mission time ranging from $t_0$ to $t_{\rm f}$. 
% 在这里使用N+1是为了后续推导变为一个N阶段决策问题，N+1个飞掠体对应N个阶段。

\textcolor{black}{Assume the spacecraft has $N+1$ flyby events, starting from the departure body (indexed by 1) and ending at the final flyby body (indexed by $N+1$), with the mission time spanning from $t_0$ to $t_{\rm f}$. The use of $N+1$ flyby bodies facilitates the next formulation of the problem as an $N$-stage decision process, with each stage corresponding to a transfer between two consecutive flyby bodies.}

The objective of the optimization problem is to minimize fuel consumption:
\begin{equation}
	\min J = \left( {{m_0} - {m_f}} \right)\quad \text{or} \quad {m_{{\rm{fuel }}}}\quad \text{or} \quad  \sum \increment {v}
\end{equation}
The system's dynamic equations are given in the general case without specifying the spacecraft's dynamical environment, considering both impulsive and low-thrust maneuvers:
\begin{equation}
	\label{eq:dynamic_general}
	\dot{\symbf{x}}(t) = \symbf{f}\left( \symbf{x}(t), \symbf{u}(t), t \right), \quad t \in [t_0, t_{\rm f}]
\end{equation}
Specifically, for the test cases presented in Sec.~\ref{sec:examples}, continuous and impulsive thrust is considered in a two-body gravitational field.

For the low-thrust case, Eq.~(\ref{eq:dynamic_general}) can be explicitly expressed as:
\begin{equation}
	\label{eq:lowthrust_dynamic}
	\left\{
	\begin{aligned}
		\symbf{\dot{r}} & = \symbf{v}                                        \\
		\symbf{\dot{v}} & = -\mu \frac{\symbf{r}}{r^3} + \frac{\symbf{T}}{m} \\
		\dot{m}         & = -\frac{\|\symbf{T}\|}{I_{\text{sp}}g_0}
	\end{aligned}
	\right.
\end{equation}
where \(\symbf{r} = [x, y, z]^\mathrm{T}\) and \(\symbf{v} = [u, v, w]^{\mathrm{T}}\) represent the position and velocity vectors of the spacecraft, respectively.
The mass of the spacecraft is denoted by \(m\), and \(r = \sqrt{x^2 + y^2 + z^2}\) represents the distance from the spacecraft to the Sun.
The gravitational parameter of the Sun is \(\mu\).
The thrust vector is given by \(\symbf{T} = [T_x, T_y, T_z]^\mathrm{T}\), and its magnitude is \(T = \sqrt{T_x^2 + T_y^2 + T_z^2}\).
The specific impulse is \(I_{\text{sp}}\), and the standard gravitational acceleration is \(g_0\).
The magnitude of the thrust is constrained by $\|\symbf{T}\| \leq T_{\text{max}}$, and $\|.\|$ represents the 2-norm.

For the impulsive thrust case, the spacecraft motion between two successive impulses is governed by the two-body gravitational field, which corresponds to setting $\|\symbf{T}\| = 0$ in Eq.~(\ref{eq:lowthrust_dynamic}). Each impulsive maneuver is modeled as an instantaneous change in velocity and mass:
\begin{equation}
\left\{
\begin{aligned}
	\symbf{r}(t^+) & = \symbf{r}(t^-),                   \\
	\symbf{v}(t^+) & = \symbf{v}(t^-) + \Delta\symbf{v}, \\
	m(t^+) & = m(t^-) \exp\left(-\frac{\|\Delta \symbf{v}\|}{g_0 I_{\text{sp}}}\right),
\end{aligned}
\right.
\label{eq:impulse_dv}
\end{equation}
where the impulse vector is \(\Delta\symbf{v}\), and the moments immediately before and after the impulse are denoted by \(t^-\) and \(t^+\), respectively.

% For the impulsive thrust case, the spacecraft dynamics are governed by the two-body gravitational field between impulses, and the trajectory propagation follows:
% \begin{equation}
% 	\left\{
% 	\begin{aligned}
% 		\symbf{\dot{r}} & = \symbf{v},                  \\
% 		\symbf{\dot{v}} & = -\mu \frac{\symbf{r}}{r^3},
% 	\end{aligned}
% 	\right.
% \end{equation}

% During the impulsive thrust events, the spacecraft's dynamics are modeled as instantaneous changes in velocity. The impulse vector is \(\Delta\symbf{v}\), and the moments immediately before and after the impulse are denoted by \(t^-\) and \(t^+\), respectively. The position and velocity vectors satisfy the following equations:
% \begin{equation}
% 	\left\{
% 	\begin{aligned}
% 		\symbf{r}(t^+) & = \symbf{r}(t^-),                   \\
% 		\symbf{v}(t^+) & = \symbf{v}(t^-) + \Delta\symbf{v},
% 	\end{aligned}
% 	\right.
% 	\label{eq:impulse_dv}
% \end{equation}

% The change in the mass of the spacecraft immediately before and after the impulse satisfies the Tsiolkovsky rocket equation:
% \begin{equation}
% 	m(t^+) = m(t^-) \exp\left(-\frac{\|\Delta \symbf{v}\|}{g_0 I_{\text{sp}}}\right)
% 	\label{eq:impulse_dm}
% \end{equation}

The constraints of problem $P_0$ are the flyby conditions, which require that the spacecraft's position coincides with the flyby target at each flyby epoch without imposing restrictions on the velocity:
\begin{align}
	r_x(t_k) & = R_{kx}(t_k)                                                  \\
	r_y(t_k) & = R_{ky}(t_k) \quad \text{for } k = 1, \cdots, N + 1           \\
	r_z(t_k) & = R_{kz}(t_k) 
	\\ t_k      & \leq t_{k + 1} \quad\quad \text{for } k = 1, \cdots, N.
\end{align}
Here, \(\symbf{R}_k=[R_{kx},R_{ky},R_{kz}]^{\rm T}\) represents the position of the flyby target, and \(t_k\) indicates the time of the \(k\)-th flyby event, with a total of \(N+1\) flyby events.

Problem $P_0$ is a general formulation of the multi-flyby trajectory optimization problem, which can be solved using either direct or indirect optimal control methods. The primary difficulty in solving this problem using classical methods lies in the flyby events being interior point equality constraints. Classical optimal control methods face significant challenges: direct methods often lead to a substantial increase in problem dimensionality, pseudospectral methods~\cite{fahroo2002direct} struggle to enforce interior-point constraints due to their reliance on orthogonal collocation points, and traditional indirect methods are hampered by the need for accurate initial guesses of the costates, and also the emergence of the other multipliers associated to the interior-point constraints. These challenges become increasingly problematic as the number of flyby events increases~\cite{Olympio:2011:Optimal}. 

\subsection{Multi-Stage Decision Problem $P_1$}

% %正如前所述，问题的复杂性主要源于飞掠体事件的内点约束，如果没有，最优控制问题就会好解很多。
% This paper leverages the flyby event constraints to transform the original problem $P_0$ into a two-level optimization problem. The inner level is an optimal control problem without intermediate constraints %并固定了起始和末端状态
% , while the outer level is converted into a multi-stage decision problem%，用来寻找全局最优解下的各阶段状态量.

\textcolor{black}{
As previously mentioned, the complexity of the problem mainly arises from the interior-point constraints imposed by flyby events. 
This study leverages the flyby event constraints to reformulate the original problem $P_0$ as a two-level optimization framework. The inner level solves an optimal control problem without intermediate constraints, with the initial and final states held fixed; while the outer level is formulated as a multi-stage decision problem, aimed at determining the state variables at each stage to find globally optimal solution. 
For clarity, it is important to note that problem $P_1$ is not the final problem to be solved, but rather a virtual intermediate formulation that facilitates a more rigorous mathematical derivation. Some definitions are designed to facilitate later derivations using dynamic programming, and may differ from those commonly seen in direct and indirect methods of optimal control.}

To begin, we define the inner optimal control problem. The globally optimal transfer cost between the $k$-th and $(k+1)$-th flyby events is defined as $g_k(\symbf{s}_k, \symbf{d}_k)$. %({\symbf{s}}_k, {\symbf{s}}_{k+1})
It is assumed that the value of $g_k$ is known, and a method to obtain this value will be presented in Sec.~\ref{sec:discretized_approximation}. $g_k(\symbf{s}_k, \symbf{d}_k)$ can be written as: %({\symbf{s}}_k, {\symbf{s}}_{k+1})
%({\symbf{s}}_k, {\symbf{s}}_{k+1})
\textcolor{black}{
\begin{align}
	\label{eq:inner_problem}
	 {g_k}({{\symbf{s}}_k},{{\symbf{d}}_k}) &= \min_{\symbf{u}(t)} \increment {v}, \quad k = 1,2, \cdot  \cdot  \cdot ,N                                  \\
	% {\rm{s.t.}} \quad {\symbf{\dot x}}(t)  & = f\left( {{\symbf{x}}(t),{\symbf{u}}(t)} \right),\quad t \in [{t_k},{t_{k + 1}}] \notag             \\
	{\rm{s.t.}}  \quad  \symbf{s}_{k+1} &= \symbf{d}_k \notag  \\
	\dot{\symbf{x}}(t) &= \symbf{f}\left( \symbf{x}(t), \symbf{u}(t), t \right), \quad t \in [t_k, t_{k+1}]   \notag  \\
	{\symbf{r}(t_k)} &= \symbf{R}_k(t_k), \quad \symbf{r}(t_{k+1}) = \symbf{R}_{k+1}(t_{k+1}) \notag.	\\
	{\symbf{v}(t_k)} &=  \symbf{v}_k, \quad \symbf{v}(t_{k+1}) =  \symbf{v}_{k+1} \notag. \\
	{m(t_k)} &= m_k  \notag
\end{align}
}
Here, $\symbf{s}_k = [t_k; \symbf{v}_k; m_k]$ represents the state vector, and $\symbf{s}_k \in \mathscr{S}_k$ indicates that the state space is consistent with the possible states achievable in problem $P_0$.
% Involving mass as a state variable may lead to confusion, as directly specifying the terminal mass makes the optimal control problem unsolvable. This will be explained in detail through a simple example in Sec.~\ref{sec:optimal values are equal}.
The vector $\symbf{d}_k$ represents the decision variables, and $\mathscr{D}_k$ represents the decision space, i.e., $\symbf{d}_k \in \mathscr{D}_k(\symbf{s}_k)$. 
\textcolor{black}{The state transition equation is $\symbf{s}_{k+1} = \symbf{d}_k$. 
Note that the decision variables $\symbf{d}_k$ here refer to those of the outer $N$-stage decision problem, rather than the inner spacecraft control variable $\symbf{u}$. }
The subscript $k$ indicates the value at the $k$-th stage. 

The above optimization problem Eq.~\eqref{eq:inner_problem} is a typical two-point boundary value optimal control problem. Importantly, $g_k$ can be expressed as an explicit function of $\symbf{s}_k$ and $\symbf{d}_k$ independent of other state variables. \textcolor{black}{This characteristic reflects the Markov property of the multi-flyby problem, which states that only the velocity and mass of spacecraft at the flyby epoch influence subsequent trajectories.}
% For notational simplicity, the expression $g_k(\symbf{s}_k, \symbf{d}_k)$ will be used hereafter.

Thus, using $g_k(\symbf{s}_k, \symbf{d}_k)$ as the cost function for the $k$th stage, we construct the objective for the outer $N$-stage decision problem. The $N$-stage decision problem $P_1$ is defined as:
\textcolor{black}{
\begin{align}
	\label{eq:multi_stage_problem_p1}
	\min J                                         & =  \sum_{k = 1}^{N} g_k(\symbf{s}_k,\symbf{d}_k)                                 \\
	\text{s.t.} \quad g_k(\symbf{s}_k,\symbf{d}_k) & = \min \increment v, \quad k = 1,2, \ldots ,N \notag                                              \\
	% \symbf{\dot x}(t)                              & = f(\symbf{x}(t),\symbf{u}(t)), \quad t \in [t_k,t_{k + 1}] \notag                                     \\
 \symbf{s}_{k+1} &= \symbf{d}_k \notag             
%  \\	{\symbf{r}(t_k)} &= \symbf{R}_k(t_k), \quad \symbf{r}(t_{k+1}) = \symbf{R}_{k+1}(t_{k+1}) \notag
\end{align}
}

In Sec.~\ref{sec:optimal values are equal}, it will be rigorously proven that the globally optimal values of problems $P_1$ and $P_0$ are equal. Therefore, problem $P_0$ can be reduced to problem $P_1$. However, this reformulation does not offer a substantial improvement over directly solving problem $P_0$. First, introducing a two-level structure does not simplify the problem; rather, it increases its complexity. Second, solving Eq.~\eqref{eq:multi_stage_problem_p1} involves infinitely many combinations of $g_k(\symbf{s}_k, \symbf{d}_k)$ for different continuous state variables. Finally, solving the optimal control problem $g_k(\symbf{s}_k, \symbf{d}_k)$ is computationally intensive. Nonetheless, transforming the optimal control problem with interior-point equality constraints into a multi-stage decision problem offers an alternative framework for addressing such problems. Once the problem is discretized and approximated, this approach proves to be advantageous in facilitating numerical solution.

\subsection{Discretization and Approximation of the Multi-Stage Decision Problem $P_2$}
\label{sec:discretized_approximation}

For problem $P_1$, if we could skip the step of solving the optimal control problem and directly obtain $g_k(\symbf{s}_k, \symbf{d}_k)$, the problem-solving process would naturally be greatly simplified. Fortunately, some available analytical tools \cite{DICARLO20182026,7850107} and a wide range of emerging database methods \cite{ZHANG2023819} and machine learning techniques \cite{zhu2019fast,7850107,Ozaki2022,viavattene2022artificial,SONG201928,zhengModelIncrementalLearning2025,quExperienceReplayEnhances2025,yangDeepNeuralNetworksBased2025} provide the possibility to quickly predict $g_k(\symbf{s}_k, \symbf{d}_k)$.
In short, the accuracy and speed of predicting $g_k(\symbf{s}_k, \symbf{d}_k)$ are continuously improving with advancements in computational techniques.

Therefore, the values of $g_k$ can be obtained through approximations, denoted as $\tilde{g}_k$:
\begin{equation}
	\label{eq:error}
	\tilde{g}_k(\symbf{s}_k, \symbf{d}_k) = g_k(\symbf{s}_k, \symbf{d}_k) + \varepsilon_k = \increment {{\tilde v}_{\min }}, \quad k = 1,2, \ldots ,N
\end{equation}
where $\varepsilon_k$ represents the prediction error. 
% Specifically, $\tilde{g}_N(\symbf{s}_N) = 0$.

Furthermore, due to computational limitations, it is stipulated that all state variables can only take discrete values, and the decision variables change accordingly. Thus, the state space $\mathscr{S}_k$ and decision space $\mathscr{D}_k$ for problem $P_2$ can only take certain discrete values of problem $P_1$. 
% To streamline this discussion, the representations of the state space and decision space for each problem will be treated as identical in the following text.
\textcolor{black}{As a result, the state transition equation becomes
$
	\symbf{s}_{k+1} = \symbf{d}_k + \symbf{\theta}_k,
$
where $\symbf{\theta}_k$ represents the rounding error introduced by discretization.}

The discretization and approximation of the $N$-stage decision problem $P_2$ is represented as:
\begin{align}
	\min J                                                            & =  \sum\limits_{k = 1}^{N} {{{\tilde g}_k}({{\symbf{s}}_k},{{\symbf{d}}_k})} \\
	{\rm{s.t.}} \quad {{\tilde g}_k}({{\symbf{s}}_k},{{\symbf{d}}_k}) & = \increment {{\tilde v}_{\min }}, \quad k = 1,2, \ldots ,N \notag                                    
	\\ 	\symbf{s}_{k+1} &= \symbf{d}_k + \symbf{\theta}_k. \notag 
	% {\symbf{r}(t_k)} &= \symbf{R}_k(t_k), \quad \symbf{r}(t_{k+1}) = \symbf{R}_{k+1}(t_{k+1}) \notag
\end{align}

Compared to problem $P_1$, problem $P_2$ eliminates the step of solving the optimal control problem and directly uses the approximated values ${{\tilde g}_k}({{\symbf{s}}_k},{{\symbf{d}}_k})$ while discretizing the state variables to meet computational performance requirements.

\section{Proof of Global Optimality and Error Bound Derivation}
\label{sec:proof}

This section addresses several theoretical issues. First, the question of whether the globally optimal solutions of problems \( P_0 \) and \( P_1 \) are identical is examined. Second, a method for obtaining the globally optimal value of problem \( P_2 \) is introduced and rigorously proven. Finally, the deviation between the solution of problem \( P_2 \) and the globally optimal solution of the original problem \( P_0 \)  is analyzed to assess the accuracy of the approximate solution. 

\subsection{Proof of Equivalence of Globally Optimal Values for Problems $P_0$ and $P_1$}

Let $\{\symbf{x}^*(t), \symbf{u}^*(t)\}$ be the optimal solution to problem $P_0$, with corresponding optimal cost denoted by $J^*$. By definition of problem $P_0$, it follows that
\begin{equation}
	J^* = \min_{\symbf{u}(t)} J = \min_{\symbf{u}(t)} \left( m_0 - m_f \right) \quad \text{or} \quad m_{\text{fuel}} \quad \text{or} \quad \sum \increment v
\end{equation}

The globally optimal cost for the $N$-stage decision problem $P_1$ is defined as
\begin{equation}
	G^\dagger = \sum_{k=1}^{N} g_k^\dagger = \min_{g_k} \left( \sum_{k=1}^{N} g_k \right)
\end{equation}
where $g_k^\dagger$ denotes the stage cost at stage $k$ corresponding to the optimal total cost $G^\dagger$.

To show that the global optimal values of problems $P_0$ and $P_1$ are equal, it suffices to prove that
\begin{equation}
	\label{eq:p0=p1}
	J^* = G^\dagger.
\end{equation}

\begin{proof}

According to the definition of problem $P_1$, any control strategy in $P_1$ is also feasible for problem $P_0$. Therefore, the solution space of $P_1$ is a subset of that of $P_0$. It follows that
\[
	J^* \le G^\dagger.
\]

To complete the proof, it remains to show that the following inequality holds:
\[
	J^* \ge G^\dagger =  \sum_{k=1}^{N} g_k^\dagger.
\]

Consider splitting the optimal solution $J^*$ of problem $P_0$ into $N$ segments, each transferring the state from $\symbf{s}_k^*$ to $\symbf{s}_{k+1}^*$ for $k = 0, 1, \dots, N-1$. Denote the cost of stage $k$ by $g_k^*$. Then, the total cost is
\[
	J^* = \sum_{k=1}^{N} g_k^*.
\]

By the definition of $g_k$, it holds that
\[
	g_k^*\left(\symbf{s}_k^*, \symbf{d}_k^*\right) \ge g_k\left(\symbf{s}_k^*, \symbf{d}_k^*\right), \quad k = 1, 2, \dots, N.
\]
% and for the terminal cost,
% \[
% 	g_N^* \ge g_N(\symbf{s}_N).
% \]

Therefore,
\[
	J^* = \sum_{k=1}^{N} g_k^* \ge \sum_{k=1}^{N} g_k.
\]

Since $G^\dagger$ is the global optimal cost of problem $P_1$, we have
\[
	\sum_{k=1}^{N} g_k \ge G^\dagger.
\]

Hence,
\[
	J^* \ge G^\dagger.
\]

Combining both inequalities yields
\[
	J^* = G^\dagger,
\]
which completes the proof.

\end{proof}
	% We use the method of contradiction. Suppose ${J}^*(t) < g_N(\symbf{s}_N) + \sum_{k=0}^{N-1} g_k^*$ holds. 
	
	%那么这表明必定至少存在一个阶段转移轨迹，即存在 $k \in \{0, \ldots, N-1\}$。使得问题$P_0$的最优控制策略$\symbf{u}^*(t)$在这段轨迹下（假设对应的初末状态量为s_k^*与s_k+1^*）， 不同于 问题$P_1$在该段转移轨迹的控制策略$\symbf{u}_k^*(t)$。 而且，在s_k^*与s_k+1^* 下， 问题p0$的最有控制策略\symbf{u}^*(t)$所产生的代价要小于g_k^*(s_k^*, d_k(s_k^*,s_k+1^*)).
	%g_k的定义即为全局最小值，这与假设矛盾。

	% Then there must exist at least one $k \in \{0, \ldots, N-1\}$ such that the control strategy $\symbf{u}_k^*(t)$ from the $k$-th stage to the the $(k+1)$-th stage of problem $P_1$ is different from the optimal control strategy $\symbf{u}^*(t)$ of problem $P_0$
	% % 而且 后者产生的代价要小于前者. 
	% This implies that using $\symbf{u}_k^*(t)$ as the control strategy instead of $\symbf{u}^*(t)$ from the $k$-th stage to the $k+1$th stage would result in an objective value lower than $J^*$, which contradicts the fact that $J^*$ represents the globally optimal solution of problem $P_0$. 
	
	% Therefore, the assumption is invalid, and the original inequality holds.

	% In conclusion,
	% $$
	% 	J^* = g_N(\symbf{s}_N) + \sum_{k=0}^{N-1} g_k^*
	% $$
	% holds, completing the proof.

\label{sec:optimal values are equal}

\subsection{Global Optimization Method for Problems $P_1$ and $P_2$ and Proof of their Optimality}

This subsection presents a method for obtaining the globally optimal values of problems $P_1$ and $P_2$, centered on the dynamic programming algorithm and Bellman's principle of optimality. The proof in this section does not directly address the two problems $P_0$ and $P_1$. Instead, to facilitate the derivation in the following subsection, a new problem $P_3$ is introduced, and the global optimality of the solution algorithm for problem $P_3$ is proven using Bellman's principle of optimality. 

The core logic involves introducing a random variable to characterize the approximation error of ${{{\tilde g}_k}({{\symbf{s}}_k},{{\symbf{d}}_k})}$ in problem $P_2$ relative to the original cost ${{{g}_k}({{\symbf{s}}_k},{{\symbf{d}}_k})}$. The classical theory of dynamic programming for stochastic control processes is used to establish a unified proof, which applies to both problems \( P_1 \) and \( P_2 \). At the end of this subsection, some properties of the newly introduced problem are discussed.

Note that Eq.~\eqref{eq:error} shows that ${{{\tilde g}_k}({{\symbf{s}}_k},{{\symbf{d}}_k})}$ in problem \( P_2 \) is represented as the original cost ${{{g}_k}({{\symbf{s}}_k},{{\symbf{d}}_k})}$ plus an approximation error.
Introducing a random variable $w_k$ to characterize this error, the cost function for the $k$th stage becomes ${\tilde g_k}({{\symbf{s}}_k},{{\symbf{d}}_k},{w_k}) = {g_k}({{\symbf{s}}_k},{{\symbf{d}}_k}) + \varepsilon ({w_k})$, where $\varepsilon ({w_k})$ represents the estimation error, and ${{\mathscr W}_k}({{\symbf{s}}_k},{{\symbf{d}}_k})$ represents the error space, satisfying the following relationship:

\begin{equation}
	{w_k} \in {{\mathscr W}_k}({{\symbf{s}}_k},{{\symbf{d}}_k}),\quad k = 1,2,\ldots,N
\end{equation}
The form of the error space is not specified here but will be explained at the end of this subsection.

The state transition equation becomes ${{\symbf{s}}_{k + 1}} = {{\symbf{d}}_k} + {\symbf{\theta }}_k({w_k})$, where ${\symbf{\theta}}_k({w_k})$ represents the state error.

The $N$-stage decision problem $P_3$ with random variables is expressed as:
\begin{align}
	\min J                               & = \mathop{\max}\limits_{\{ w_1,w_2,\ldots,w_{N}\} } [\sum_{k = 1}^{N} \tilde{g}_k(\symbf{s}_k,\symbf{d}_k,w_k) ] \\
	\text{s.t.} \quad
	\tilde{g}_k(\symbf{s}_k,\symbf{d}_k) & = g_k(\symbf{s}_k,\symbf{d}_k) + \varepsilon (w_k) \quad k = 1,2, \ldots ,N \notag                                                              
	\\ \symbf{s}_{k + 1} &= \symbf{d}_k + \symbf{\theta}_k(w_k) \notag 
	% \\ {\symbf{r}(t_k)} &= \symbf{R}_k(t_k), \quad \symbf{r}(t_{k+1}) = \symbf{R}_{k+1}(t_{k+1}) \notag
\end{align}

\begin{proposition}
	\label{prof_DP}
	For problem $P_3$, i.e., $\min J = \mathop {\max }\limits_{\{ {w_1},{w_2},\ldots,{w_{N}}\} } [\sum\limits_{k = 1}^{N } {{{\tilde g}_k}({{\symbf{s}}_k},{{\symbf{d}}_k},{w_k})} ]$, if the optimal solution exists, for $\forall {{\symbf{s}}_1} \in {{\mathscr S}_1}$, the following algorithm is obtained: for $k = 1,2, \ldots ,N$,
	\begin{equation}
		\label{eq:DP_proof}
		J_k^{[\mathrm{DP}]}\left( \symbf{s}_k \right) = \min_{\symbf{d}_k \in \mathscr{D}_k} \max_{w_k \in \mathscr{W}_k(\symbf{s}_k,\symbf{d}_k)} \left[ \tilde{g}_k(\symbf{s}_k,\symbf{d}_k,w_k) + J_{k + 1}^{[\mathrm{DP}]}( \symbf{d}_k + \symbf{\theta}_k(w_k)) \right].
	\end{equation}{}
	Specifically, $J_{N+1}^{[\mathrm{DP}]}\left( \symbf{s}_{N+1} \right) = 0$.

	If $J_k^{[{\rm{DP}}]}\left( {{{\symbf{s}}_k}} \right)$ is bounded, then the stage values obtained from Eq. \eqref{eq:DP_proof} are the globally optimal values of $P_3$, i.e.,
	\[{J^*}({{\symbf{s}}_1}) = \mathop {\min }\limits_{\left\{ {{{\symbf{d}}_1},{{\symbf{d}}_2}, \ldots ,{{\symbf{d}}_{N}}} \right\}} \mathop {\max }\limits_{\{ {w_1},{w_2},...,{w_{N}}\} } \left[ { \sum\limits_{k = 1}^{N} {{{\tilde g}_k}({{\symbf{s}}_k},{{\symbf{d}}_k},{w_k})} } \right] = J_{_1}^{[{\rm{DP}}]}\left( {{{\symbf{s}}_1}} \right)\].
\end{proposition}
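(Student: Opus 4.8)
The plan is to prove Proposition~\ref{prof_DP} by backward induction on the stage index, which is the standard route for establishing Bellman's principle of optimality in a min-max (robust) stochastic control setting. The quantity $J_k^{[\mathrm{DP}]}(\symbf{s}_k)$ defined recursively in Eq.~\eqref{eq:DP_proof} will be shown to equal the optimal cost-to-go from state $\symbf{s}_k$ at stage $k$, namely
\[
	J_k^{[\mathrm{DP}]}(\symbf{s}_k) = \min_{\{\symbf{d}_k,\ldots,\symbf{d}_N\}} \max_{\{w_k,\ldots,w_N\}} \sum_{j=k}^{N} \tilde{g}_j(\symbf{s}_j,\symbf{d}_j,w_j),
\]
with the state trajectory generated by $\symbf{s}_{j+1} = \symbf{d}_j + \symbf{\theta}_j(w_j)$. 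The case $k=1$ is then exactly the assertion of the proposition. First I would fix the terminal condition: at $k=N+1$ the tail sum is empty, so the optimal cost-to-go is $0$, which matches $J_{N+1}^{[\mathrm{DP}]}(\symbf{s}_{N+1})=0$ by definition; this anchors the induction.

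For the inductive step, I would assume the identity holds at stage $k+1$ for every admissible $\symbf{s}_{k+1}\in\mathscr{S}_{k+1}$, and prove it at stage $k$. The key algebraic observation is that the tail objective starting at stage $k$ splits as $\tilde{g}_k(\symbf{s}_k,\symbf{d}_k,w_k)$ plus the tail objective starting at stage $k+1$, and only the latter depends on $(\symbf{d}_{k+1},\ldots,\symbf{d}_N)$ and $(w_{k+1},\ldots,w_N)$; moreover $\symbf{d}_k$ and $w_k$ enter the tail only through the next state $\symbf{s}_{k+1} = \symbf{d}_k + \symbf{\theta}_k(w_k)$ — this is precisely the Markov property emphasized earlier in the text. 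Hence one can interchange the order of the nested $\min$/$\max$ operators over the later decisions and disturbances with the inner optimization, pull the stage-$k$ term out, and apply the induction hypothesis to recognize the optimized tail as $J_{k+1}^{[\mathrm{DP}]}(\symbf{d}_k+\symbf{\theta}_k(w_k))$. What remains is
\[
	\min_{\symbf{d}_k}\max_{w_k}\Bigl[\tilde{g}_k(\symbf{s}_k,\symbf{d}_k,w_k) + \min_{\{\symbf{d}_{k+1},\ldots\}}\max_{\{w_{k+1},\ldots\}}\!\!\sum_{j=k+1}^{N}\!\!\tilde{g}_j\Bigr],
\]
and the bracketed inner expression is $J_{k+1}^{[\mathrm{DP}]}(\symbf{s}_{k+1})$ by hypothesis, giving Eq.~\eqref{eq:DP_proof}.

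The step that requires the most care — and the main obstacle — is justifying the rearrangement of the operators, specifically moving $\max_{w_k}$ past $\min_{\{\symbf{d}_{k+1},\ldots,\symbf{d}_N\}}$. In general $\max\min \le \min\max$, so the commutation is not free; what saves the argument here is the decision structure: the later decisions $\symbf{d}_{k+1},\ldots,\symbf{d}_N$ may be chosen as functions of the realized disturbance $w_k$ (equivalently, of the observed state $\symbf{s}_{k+1}$), i.e.\ the policy is closed-loop, not open-loop. Under this information pattern the adversary's choice of $w_k$ is revealed before the controller commits to $\symbf{d}_{k+1},\ldots$, so the value decomposes stage-by-stage exactly as in Bellman recursion, and the interchange is valid. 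I would make this explicit by working with policies $\symbf{\mu}_j:\mathscr{S}_j\to\mathscr{D}_j$ (consistent with the notation $\symbf{\mu}\in\mathscr{U}$ in the Nomenclature) rather than with fixed decision sequences, so that the tail optimization genuinely only sees $\symbf{s}_{k+1}$ as its initial condition. The boundedness hypothesis on $J_k^{[\mathrm{DP}]}$ enters to guarantee that all the nested extrema are finite and attained (or at least that the $\min/\max$ are well defined and the exchange does not produce $\infty-\infty$), which legitimizes the finite induction; existence of an optimal solution, assumed in the statement, supplies the optimizing policy so that the two sides of the identity are genuinely equal rather than merely related by an inequality. Finally I would note that problem $P_1$ is recovered by setting $\varepsilon(w_k)\equiv 0$ and $\symbf{\theta}_k(w_k)\equiv 0$ (a degenerate single-point disturbance space), and $P_2$ by taking the disturbance realization fixed at its actual value, so the proposition delivers a unified dynamic-programming solution and optimality guarantee for both, as claimed.
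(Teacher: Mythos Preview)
Your proposal is correct and follows essentially the same route as the paper: backward induction on the stage index with the terminal anchor $J_{N+1}^{[\mathrm{DP}]}=0$, splitting off the stage-$k$ cost, and then justifying the interchange of $\max_{w_k}$ with the minimization over later decisions via the closed-loop (policy) interpretation. The paper packages that interchange as a standalone Lemma~\ref{lemma:1} (from Bertsekas' stochastic DP theory) and applies it with $G_0(\symbf{w})=\tilde g_k$ and $G_1(\symbf{f}(\symbf{w}),\symbf{d})$ the maximized tail, which is exactly your argument that later decisions may depend on the realized $\symbf{s}_{k+1}$; so your plan and the paper's proof are the same in substance.
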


\begin{proof}
	First, define $J_k^*({{\symbf{s}}_k})$ as the globally optimal value of the decision problem from stage $k$ to the end of stage $N$, which spans ($N+1-k$) stages. For $\forall {{\symbf{s}}_k} \in {{\mathscr S}_k}$, $k = 1,2, \cdots ,N$, we have
	\begin{equation}
		J_k^*\left( {{{\symbf{s}}_k}} \right) = \mathop {\min }\limits_{\left\{ {{{\symbf{d}}_k},{{\symbf{d}}_{k + 1}}, \cdots ,{{\symbf{d}}_{N}}} \right\}} \mathop {\max }\limits_{\{ {w_k},{w_{k + 1}},...,{w_{N}}\} } \left[ { \sum\limits_{i = k}^{N} {{{\tilde g}_i}({{\symbf{s}}_i},{{\symbf{d}}_i},{w_i})} } \right].
	\end{equation}
	Specifically, $J_{N+1}^{*}\left( \symbf{s}_{N+1} \right) = 0$.

	By definition, ${J^*}({{\symbf{s}}_1}) = J_1^*({{\symbf{s}}_1})$. If each subproblem satisfies $J_k^*({{\symbf{s}}_k}) = J_k^{[{\rm{DP}}]}({{\symbf{s}}_k})$, $k = 1,2, \cdots ,N+1 $, then the proposition holds.

	Therefore, we use mathematical induction to prove $J_k^*({{\symbf{s}}_k}) = J_k^{[{\rm{DP}}]}({{\symbf{s}}_k})$, $k = 1,2, \cdots ,N+1$, for $\forall {{\symbf{s}}_k} \in {{\mathscr S}_k}$.

	When $i=N+1$, by definition, $J_{N+1}^*({{\symbf{s}}_{N+1}}) = J_{N+1}^{[{\rm{DP}}]}({{\symbf{s}}_{N+1}}) = 0$ holds. Assume that when $i=k+1$, $J_{k + 1}^*({{\symbf{s}}_{k + 1}}) = J_{k + 1}^{[{\rm{DP}}]}({{\symbf{s}}_{k + 1}})$ holds for $\forall {{\symbf{s}}_{k + 1}} \in {{\mathscr S}_{k + 1}}$.

	Then, for $i=k$ and for $\forall {{\symbf{s}}_k} \in {{\mathscr S}_k}$, we have:
	\begin{align}
		J_k^*\left( \symbf{s}_k \right)
		 & = \min_{\{ \symbf{d}_k, \symbf{d}_{k + 1}, \ldots, \symbf{d}_{N } \}} \max_{\{ w_k, w_{k + 1}, \ldots, w_{N } \}} \bigg[  \sum_{i = k}^{N } \tilde{g}_i(\symbf{s}_i, \symbf{d}_i, w_i) \bigg]                                \\
		 & = \min_{\{ \symbf{d}_k, \symbf{d}_{k + 1}, \ldots, \symbf{d}_{N} \}} \max_{\{ w_k, w_{k + 1}, \ldots, w_{N } \}} \bigg[ \tilde{g}_k(\symbf{s}_k, \symbf{d}_k, w_k) + \sum_{i = k + 1}^{N} \tilde{g}_i(\symbf{s}_i, \symbf{d}_i, w_i) \bigg]                                                                                                                       \\
		 & = \min_{\{ \symbf{d}_k, \symbf{d}_{k + 1}, \ldots, \symbf{d}_{N} \}} \max_{w_k \in \mathscr{W}_k(\symbf{s}_k, \symbf{d}_k)} \Bigg\{  \tilde{g}_k(\symbf{s}_k, \symbf{d}_k, w_k) +  \max_{\{ w_{k + 1}, \ldots, w_{N } \}} \bigg[ \sum_{i = k + 1}^{N} \tilde{g}_i(\symbf{s}_i, \symbf{d}_i, w_i) \bigg]  \Bigg\}                                                  \\
		%  & = \min_{\{ \symbf{\mu}_k, \symbf{\mu}_{k + 1}, \ldots, \symbf{\mu}_{N - 1} \}} \max_{w_k \in \mathscr{W}_k(\symbf{s}_k, \symbf{\mu}_k(\symbf{s}_k))} \Bigg\{ \tilde{g}_k(\symbf{s}_k, \symbf{\mu}_k(\symbf{s}_k), w_k) +  \max_{\{ w_{k + 1}, \ldots, w_{N - 1} \}} \bigg[ \tilde{g}_N(\symbf{s}_N) + \sum_{i = k + 1}^{N - 1} \tilde{g}_i(\symbf{s}_i, \symbf{\mu}_i(\symbf{s}_i), w_i) \bigg] \Bigg\} \\
		%  & = \min_{\symbf{\mu}_k \in \mathscr{U}_k} \Bigg\{ \min_{\{ \symbf{\mu}_{k + 1}, \ldots, \symbf{\mu}_{N - 1} \}} \max_{w_k \in \mathscr{W}_k(\symbf{s}_k, \symbf{\mu}_k(\symbf{s}_k))}
		% \bigg[ \tilde{g}_k(\symbf{s}_k, \symbf{\mu}_k(\symbf{s}_k), w_k) + \max_{\{ w_{k + 1}, \ldots, w_{N - 1} \}}  \sum_{i = k + 1}^{N - 1}  \tilde{g}_i(\symbf{s}_i, \symbf{\mu}_i(\symbf{s}_i), w_i) + \tilde{g}_N(\symbf{s}_N) \bigg] \Bigg\} \label{eq:dp_last_line_not_prooved}
		& = \min_{\symbf{d}_k \in \mathscr{D}_k} \Bigg\{ \min_{\{ \symbf{d}_{k + 1}, \ldots, \symbf{d}_{N} \}} \max_{w_k \in \mathscr{W}_k(\symbf{s}_k, \symbf{d}_k(\symbf{s}_k))}
		\bigg[ \tilde{g}_k(\symbf{s}_k, \symbf{d}_k(\symbf{s}_k), w_k) + \max_{\{ w_{k + 1}, \ldots, w_{N} \}}  \sum_{i = k + 1}^{N}  \tilde{g}_i(\symbf{s}_i, \symbf{\mu}_i(\symbf{s}_i), w_i) \bigg] \Bigg\} \label{eq:dp_last_line_not_prooved}
	\end{align}

	Next, using Lemma \ref{lemma:1}, we interchange the min and max operators. Let ${\symbf{w}} = {w_k}$, 
	${\symbf{\mu }} = [{{\symbf{d}}_k}; {{\symbf{d}}_{k + 1}}; \ldots; {{\symbf{d}}_{N - 1}}]$, 
	${\symbf{d}} = [{{\symbf{d}}_k}; {{\symbf{d}}_{k + 1}}; \ldots; {{\symbf{d}}_{N - 1}}]$, and ${\symbf{f}}({\symbf{w}}) = {{\symbf{s}}_{k + 1}}$.
	\begin{align*}
		G_0(\symbf{w})                      & =
		\begin{cases}
			\tilde{g}_k(\symbf{s}_k,\symbf{d}_k(\symbf{s}_k),w_k), & w_k \in \mathscr{W}_k(\symbf{s}_k,\symbf{d}_k) \\
			\infty,                                                  & \text{otherwise}
		\end{cases} \\
		G_1(\symbf{f}(\symbf{w}),\symbf{d}) & =
		\begin{cases}
			\max\limits_{\{ w_{k + 1}, \ldots, w_{N } \}}  \sum\limits_{i = k + 1}^{N}  \tilde{g}_i(\symbf{s}_i, \symbf{d}_i(\symbf{s}_i), w_i) ,
			 & w_i \in \mathscr{W}_i(\symbf{s}_i,\symbf{d}_i) \\
			\infty,
			 & \text{otherwise}
		\end{cases}
	\end{align*}

	According to Lemma \ref{lemma:1}, Eq. \eqref{eq:dp_last_line_not_prooved} is further transformed into
	\begin{align}
		J_k^*\left( \symbf{s}_k \right)
		%  & = \min_{{\symbf{\mu}_k \in \mathscr{U}_k}} \max_{{w_k \in \mathscr{W}_k(\symbf{s}_k,\symbf{\mu}_k(\symbf{s}_k))}}
		% \Bigg\{ \tilde{g}_k(\symbf{s}_k,\symbf{\mu}_k(\symbf{s}_k),w_k) +  \min_{\{\symbf{d}_{k+1}, \ldots, \symbf{d}_{N-1}\}} \max_{\{w_{k+1},\ldots,w_{N-1}\}}
		% \bigg[ \tilde{g}_N(\symbf{s}_N) + \sum_{i=k+1}^{N-1} \tilde{g}_i(\symbf{s}_i,\symbf{d}_i,w_i) \bigg] \Bigg\}                                                                                        \\
		 & = \min_{{\symbf{d}_k \in \mathscr{D}_k}} \max_{{w_k \in \mathscr{W}_k(\symbf{s}_k,\symbf{d}_k)}}
		\Bigg\{ \tilde{g}_k(\symbf{s}_k,\symbf{d}_k,w_k) +  \min_{\{\symbf{d}_{k+1}, \ldots, \symbf{d}_{N}\}} \max_{\{w_{k+1},\ldots,w_{N}\}}
		\bigg[  \sum_{i=k+1}^{N} \tilde{g}_i(\symbf{s}_i,\symbf{d}_i,w_i) \bigg] \Bigg\}                                                                                                                                        \\
		 & = \min_{{\symbf{d}_k \in \mathscr{D}_k}} \max_{{w_k \in \mathscr{W}_k(\symbf{s}_k,\symbf{d}_k)}} \left[ \tilde{g}_k(\symbf{s}_k,\symbf{d}_k,w_k) + J_{k+1}^*(\symbf{s}_{k+1}) \right]                                               \\
		 & = \min_{{\symbf{d}_k \in \mathscr{D}_k}} \max_{{w_k \in \mathscr{W}_k(\symbf{s}_k,\symbf{d}_k)}} \left[ \tilde{g}_k(\symbf{s}_k,\symbf{d}_k,w_k) + J_{k+1}^{\mathrm{[DP]}}(\symbf{s}_{k+1}) \right]                                 \\
		 & = \min_{{\symbf{d}_k \in \mathscr{D}_k}} \max_{{w_k \in \mathscr{W}_k(\symbf{s}_k,\symbf{d}_k)}} \left[ \tilde{g}_k(\symbf{s}_k,\symbf{d}_k,w_k) + J_{k+1}^{\mathrm{[DP]}}(\symbf{d}_k + \symbf{\theta}_k(w_k)) \right] \\
		 & = J_k^{\mathrm{[DP]}}(\symbf{s}_k)
	\end{align}

	Therefore, the induction hypothesis holds, and the original proposition is proved.
\end{proof}

As already pointed out, the above proof does not depend on the specific form of the error space, meaning the corresponding error space can be freely specified. Thus, two important properties of problem $P_3$ are obtained as follows:
\begin{enumerate}
	\item When all $w_k$ are constantly zero, i.e., $\forall k, w_k = 0$, the corresponding error terms (estimation error or state error) will also be zero. In this special case, problem $P_3$ degenerates into problem $P_1$. This indicates that problem $P_1$ can be regarded as a special case of problem $P_3$ under no perturbation or disturbance.
	\item In problem $P_2$, the objective function value \({\tilde g_k}\) at each stage corresponds to $w_k$ and \({\tilde g_k}\) in problem $P_3$. This implies that by appropriately choosing $w_k$, a solution in problem $P_3$ can match a solution in problem $P_2$. Therefore, the solution space of problem $P_2$ is a subset of the solution space of problem $P_3$.
\end{enumerate}

From this, it can be concluded that the solution spaces of problems $P_1$ and $P_2$ are subsets of the solution space of problem $P_3$. Therefore, the difference in the globally optimal solutions between problems $P_1$ and $P_2$ will naturally be constrained by the difference in the globally optimal solutions between problems $P_1$ and $P_3$. This will facilitate the derivation in the next section.

\subsection{Derivation of the Upper Bound of the Globally Optimal Solution Error between Problems $P_0$ and $P_2$}

This section addresses another key research question. After transforming the original problem $P_0$ into a multi-stage decision problem and then discretizing and approximating it into problem $P_2$, is there any mathematical relationship between the globally optimal solutions of problem $P_2$ and the original problem $P_0$? Furthermore, what is the maximum possible discrepancy between the two? This section addresses these questions.

To avoid confusion, the globally optimal solutions of each stage for problems $P_1$, $P_2$, and $P_3$ are defined as $J_k^{[{P_1}]}\left( {{{\symbf{s}}_k}} \right)$, $J_k^{[{P_2}]}\left( {{{\symbf{s}}_k}} \right)$, and $J_k^{[{P_3}]}\left( {{{\symbf{s}}_k}} \right)$, respectively, for $k = 1, 2, \cdots, N+1$. For convenience in the subsequent derivations, ${\varepsilon _{\max }} = \max \left\{ {{\varepsilon _1}\left( {{w_1}} \right), {\varepsilon _2}\left( {{w_2}} \right), \ldots, {\varepsilon _{N }}\left( {{w_{N}}} \right)} \right\}$ is defined as the maximum error in the approximation of ${\tilde g}$ across all stages. Using the relevant formulas from the previous section's proof, the following equation holds:
\begin{align}
	J_{N+1}^{[{P_3}]}\left( {{{\symbf{s}}_{N+1}}} \right) & = J_{N+1}^{[{P_1}]}\left( {{{\symbf{s}}_{N+1}}} \right) = 0                                                                                                                                                                       \\
	J_{N}^{[P_3]}(\symbf{s}_{N})          & = \min_{\symbf{d}_{N } \in \mathscr{D}_{N }} \max_{w_{N} \in \mathscr{W}_{N}} \left[ \tilde{g}_{N}(\symbf{s}_{N},\symbf{d}_{N },w_{N }) + J_{N + 1}^{[P_3]}(\symbf{s}_{N + 1}) \right] \notag \\
	& \le \min_{\symbf{d}_{N } \in \mathscr{D}_{N }} \left[ g_{N }(\symbf{s}_{N },\symbf{d}_{N }) + \varepsilon_{\max} + J_{N + 1}^{[P_1]}(\symbf{s}_{N + 1}) \right] \notag                \\
	& = J_{N }^{[P_1]}(\symbf{s}_{N }) + \varepsilon_{\max}                                                                                                                                                              \\
	J_{N - 1}^{[P_3]}(\symbf{s}_{N - 1})          & = \min_{\symbf{d}_{N - 1} \in \mathscr{D}_{N - 1}} \max_{w_{N - 1} \in \mathscr{W}_{N - 1}} \left[ \tilde{g}_{N - 1}(\symbf{s}_{N - 1},\symbf{d}_{N - 1},w_{N - 1}) + J_N^{[P_3]}(\symbf{s}_N) \right] \notag             \\
	                                              & \le \min_{\symbf{d}_{N - 1} \in \mathscr{D}_{N - 1}} \left[ g_{N - 1}(\symbf{s}_{N - 1},\symbf{d}_{N - 1}) + \varepsilon_{\max} + J_N^{[P_1]}(\symbf{s}_N)  + \varepsilon_{\max}\right] \notag                                                 \\
	                                              & = J_{N - 1}^{[P_1]}(\symbf{s}_{N - 1}) + 2\varepsilon_{\max}                                                                                                                                                               \\
	                                              & \ldots \notag                                                                                                                                                                                                             \\
	J_1^{[{P_3}]}\left( {{{\symbf{s}}_1}} \right) & \le J_1^{[{P_1}]}\left( {{{\symbf{s}}_1}} \right) + N  {\varepsilon _{{\rm{max }}}}
\end{align}

Since the solution space of problem $P_2$ is a subset of the solution space of problem $P_3$, and according to Eq. \eqref{eq:p0=p1}, we have
\begin{equation}
	\label{eq:error_bounds}
	J_1^{[{P_2}]}\left( {{{\symbf{s}}_1}} \right) \le J_1^{[{P_1}]}\left( {{{\symbf{s}}_1}} \right) + N \cdot {\varepsilon _{{\rm{max }}}} = J_1^{[{P_0}]}\left( {{{\symbf{s}}_1}} \right) + N \cdot {\varepsilon _{{\rm{max }}}},
\end{equation}

Equation~\eqref{eq:error_bounds} is referred to as the error upper bound equation in this paper. It indicates that the final obtained solution will not differ from the globally optimal solution of the original problem by more than $N$ times the maximum single-stage error. Thus, this paper provides the globally optimal solution with theoretical guarantees for the multi-flyby problem.

\section{Analysis and Discussion}
\label{sec:analysis_discussion}

The goal of this paper is to exploit the Markov property of multi-flyby problems to develop a more efficient modeling formulation, enabling dynamic programming methods to find the global optimum. This section examines the logical interconnections between the original optimal control problem ($P_0$) and its reformulated multi-stage decision problems ($P_1$, $P_2$, and $P_3$). This reformulation facilitates error quantification and ensures globally optimal solutions. In what follows, the connections between multi-flyby and rendezvous problems and their implications for both impulsive and low-thrust trajectory design are analyzed. Finally, this section explores the computational complexity of the proposed methods and introduces practical solving techniques.

\subsection{Relationship between Problems and Explanation of Globally Optimal Solutions}

\begin{figure}
	\centering
	\includegraphics[width=0.9\linewidth]{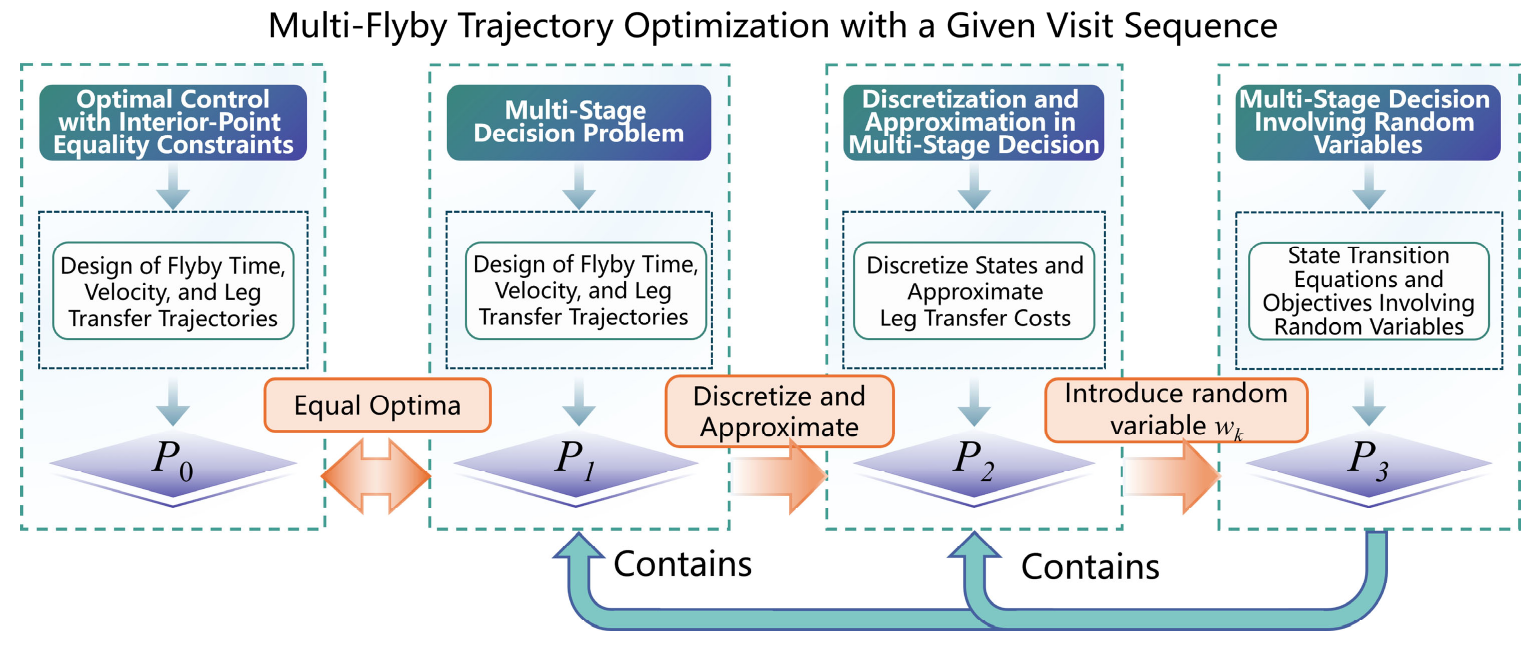}
	\caption{The logical relationships between different problems.}
	\label{fig:ch4-figre_lationship_problems}
\end{figure}

The relationships between the problems are illustrated in Fig. \ref{fig:ch4-figre_lationship_problems}. Problem $P_0$ is defined as the original optimal control problem, which involves interior point equality constraints. Problem $P_1$ is a multi-stage decision problem with the same globally optimal value as problem $P_0$. Problem $P_2$ is a discretized and approximated version of Problem $P_1$, where the transition costs and state space are approximated, making it suitable for computational implementation. Problem $P_3$ is a constructive multi-stage decision problem with random variables that aims to integrate the solution spaces of problems $P_2$ and $P_1$, as shown in Fig. \ref{fig:ch4-figre_solutionspace}. Although introducing random variables in Problem $P_3$ increases the complexity of the problem, it clarifies the relationship between the globally optimal solutions of Problems $P_1$ and $P_2$ by embedding them into a higher-dimensional solution space.

\begin{figure}
	\centering
	\includegraphics[width=0.4\linewidth]{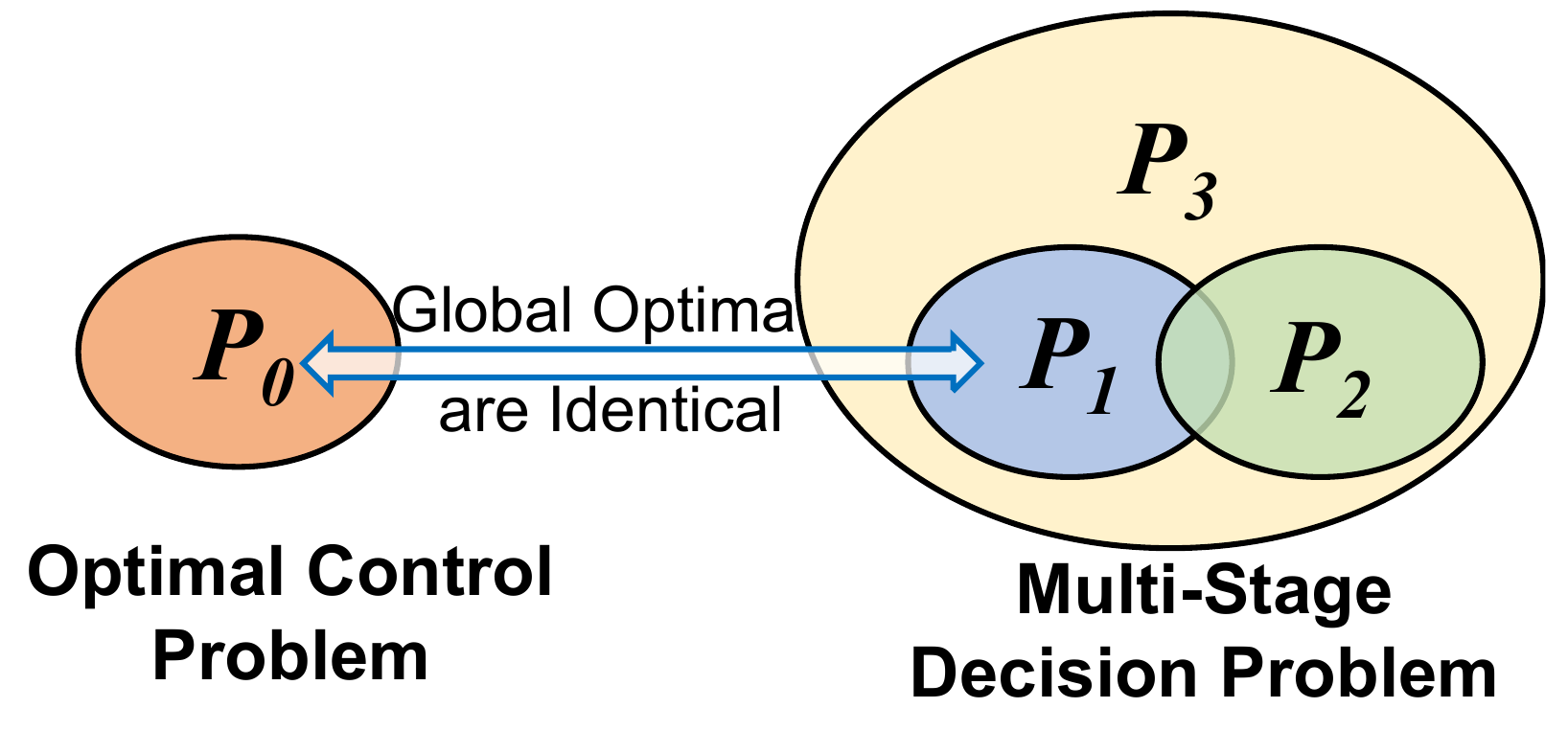}
	\caption{The solution space relationship between different problems.}
	\label{fig:ch4-figre_solutionspace}
\end{figure}

According to Proposition \ref{prof_DP}, solving problem $P_2$ using dynamic programming yields the globally optimal solution. However, the solution to problem $P_2$ obtained by this method is essentially a discretized and approximated version of the globally optimal solution to the original problem $P_1$. Therefore, Eq.~\eqref{eq:error_bounds} is introduced in this paper to quantify the error between the two solutions.

\subsection{Analysis of the Error Upper Bound}
\label{sec:error_upper_bound}

In the derivation of the error upper bound in Eq. \eqref{eq:error_bounds}, the term with $N$ times the single-stage maximum error ${\varepsilon _{\max }}$ can be simplified to the sum of the maximum errors at each stage, i.e., $\sum {\varepsilon _i}\left( {w_i} \right)$. If the optimization criterion is to minimize fuel mass consumption, the related formulas become more complex and cannot be represented as a simple linear superposition. This indicates that minimizing the total velocity increment is more suitable as the optimization criterion for dynamic programming algorithms despite the equivalence of the two problems in terms of optimization criteria.

When refining the discrete grid and improving the approximation function, i.e., as ${\varepsilon _{\max }} \to 0$, the result approaches the theoretical globally optimal solution of the original problem. Note that the upper error bound represents a theoretical limit; in practice, the worst-case scenario is rare, and approximate solutions are typically near the true global optimum. Moreover, stochastic dynamic programming theory can be applied if the random variables follow unbiased probability estimates and the error space is measurable. It can then be proven that the expected globally optimal value of problem $P_3$ equals the globally optimal value of problem $P_1$.

The significance of the error upper bound equation lies in two aspects: first, in practical mission design, if the calculated error upper bound meets the specified design target requirements, calculations can be stopped immediately, saving further work using other methods; second, easily obtainable single-state error information can be used with the error upper bound in Eq. \eqref{eq:error_bounds} to derive the error for all stages. Previously, there were few effective methods to directly evaluate the error of the entire solution, making this a key contribution of this paper.
This approach thereby provides an efficient tool for evaluating solution properties.

\subsection{Connection between Multi-Flyby and Multi-Rendezvous Problems}

The main difference in mathematical representation between the multi-flyby problem and the multi-rendezvous problem is that the rendezvous problem requires both the position and the velocity to coincide with the target at the rendezvous epoch. Hence, the multi-rendezvous problem can be regarded as a special case of the multi-flyby problem. The logical framework in this paper is an extension of the method presented in \cite{Zhang2024}, and has also been applied in designing the GTOC12 problem \cite{zhangSustainableAsteroidMining2025a}.

Unlike in the multi-flyby problem, omitting the velocity term in the multi-rendezvous problem reduces the state variable dimension by three. This reduction lowers the time complexity of the algorithm, thereby improving computational efficiency. This observation also motivates other reductions in the state variable dimension, under certain simplifications, to improve computational performance in the flyby problem. Details of these simplifications and their implications are discussed in Sec.\ref{sec:complexity_analysis}.

\subsection{Connection between Impulsive and Low-Thrust Maneuvers}
\label{sec:impulse_low_thrust}

In the multi-stage decision problem, the difference between impulsive and low-thrust propulsion is further reflected in the treatment of state variables: the mass term can be neglected in impulsive propulsion, while it must be included for low-thrust propulsion, directly affecting the spacecraft's reachability. The reachability refers to whether the spacecraft can reach the target under given initial and final positions and velocities through low-thrust propulsion.
This is feasible in the context of global optimization, as a spacecraft may consume more fuel earlier to improve its later reachability, ultimately achieving a better overall result.
Therefore, compared with impulsive propulsion, low-thrust trajectory optimization introduces additional challenges due to the need for reachability constraints.

\begin{figure}[ht]
	\centering
	\includegraphics[width=0.4\linewidth]{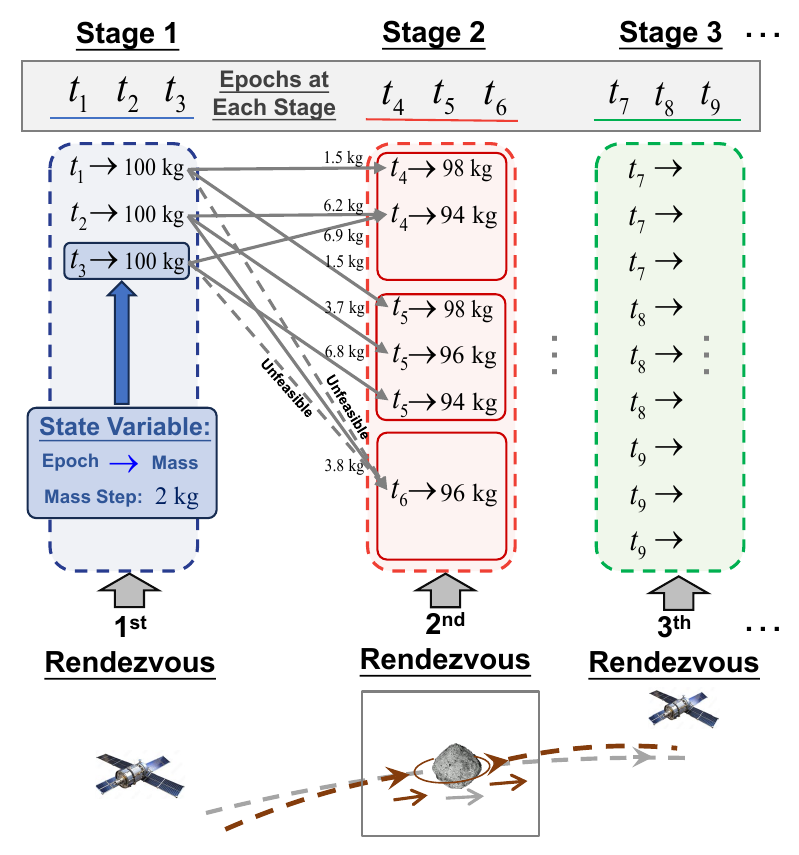}
	\caption{A schematic diagram of multi-rendezvous low-thrust dynamic programming.}
	\label{fig:rendezvous_low_throust}
\end{figure}

Including mass as a state variable may cause confusion, as the final mass in a fuel-optimal control problem is determined during the solution process rather than predefined. To clarify this process and highlight the role of mass as a state variable, a multi-rendezvous low-thrust problem is used as an example in this section, with an analogous treatment for multi-flyby problems. As shown in Fig.~\ref{fig:rendezvous_low_throust}, the spacecraft's initial mass is set to 100 kg with a step size of 2 kg. First, all possible states are generated for Stage 1. Next, for each terminal time in Stage 2 (e.g., $t_4, t_5, t_6$), the corresponding optimal control problems are solved, and the resulting fuel mass consumption (converted from $\increment v$) is recorded in the figure.
For instance, at $t_4$, the lowest fuel consumption could be 1.5 kg, while other feasible trajectories may require 6.2 kg or 6.9 kg. Hence, the state at $t_4$ is classified into two discrete mass levels: 98 kg and 94 kg. If a state in Stage 2, such as $(t_4, {\text{94 kg}})$, can be reached from multiple Stage 1 states, the predecessor yielding the lowest fuel consumption, e.g., $(t_2, {\text{100 kg}})$, is selected.

It is important to note that mass discretization is employed solely to assess the feasibility of different trajectories; no approximations are made to the actual fuel consumption. In other words, when using the state $(t_4 , \text{ 94 kg})$ as the initial condition for Stage 3, the initial mass should be the continuous value obtained from the calculation (e.g., $100 - 6.2 = 93.8$ kg), thereby minimizing the error introduced by mass discretization.

\subsection{Complexity Analysis and Practical Solving Techniques}
\label{sec:complexity_analysis}

\subsubsection{Low thrust}
\label{sec:low-thrust}

In a multi-stage decision problem consisting of $N$ stages, each with $N_s$ states, the dynamic programming algorithm generally requires updating decisions and computing state transitions for every state in each stage. This means that the computational complexity per stage is $O(N_s^{2})$. As there are $N$ such stages, the total time complexity is $O(N_s^{2}(N-1) + N_s)$.

For the multi-flyby problem, let $N_t=\lceil (T_{\rm end} -T_{\rm start})/T_{\rm step} \rceil$ represent the total number of discrete time steps in each stage, where $T_{\mathrm{step}}$ is the discrete time step size. Similarly, let $N_v$ and $N_m$ represent the total number of discrete velocities and masses in each stage, respectively. Therefore, the total number of states per stage can be expressed as $N_s = N_t N_v^3 N_m$. The computational effort of the dynamic programming process is approximately:
\[
	N_s^2 (N-1) + N_s \approx N_s^2 N = N_t^2 N_v^6 N_m^2 N.
\]

Assuming $N_t = N_v = N_m = N = 10$, this method requires $10^{11}$ calculations, highlighting the ``curse of dimensionality'' faced by dynamic programming. The following paragraphs introduce practical techniques aimed at mitigating these computational demands. Although these methods do not fully overcome the curse of dimensionality, they substantially lower the computational complexity, making the approach more feasible.

Low-thrust propulsion significantly restricts a spacecraft's reachable state region. Therefore, a natural strategy is to limit the search to states near transfer trajectories with relatively low fuel consumption. This aligns with the goal of optimizing fuel consumption and increases the likelihood of successful transfers. In extreme cases, trajectories determined solely by the dynamical environment and requiring no maneuvers are inherently reachable, regardless of thrust limitations. In the two-body problem, this corresponds to Keplerian orbits, which can be calculated using well-established Lambert solvers \cite{Russell2019,izzo2015revisiting,Russell2021}. Considering the dynamical environment with perturbations, like cislunar space \cite{topputoMeteoroidsDetectionLUMIO2023,cervoneLUMIOCubeSatObserving2022}, some improved Lambert solvers have also been widely developed for computational convenience \cite{armellin2018multiple,Yang2021,woollands2015new}.

Thus, the first step involves computing ballistic trajectories between consecutive layers and discarding those with eccentricities or inclinations apparently out of range. This step helps identify feasible arrival time intervals. Next, the velocity dimension of each layer is dynamically determined based on time. Specifically, for each epoch step in the current stage, the ballistic trajectory is computed from the average time of the previous layer to the current epoch.
The state velocities at this epoch are centered around the terminal velocity of the ballistic trajectory. Finally,  as noted in Sec.\ref{sec:impulse_low_thrust}, the terminal mass is not pre-specified but is determined based on subsequent calculations.

\subsubsection{Bi-Impulse}

\begin{figure}[ht]
	\centering
	\includegraphics[width=0.4\linewidth]{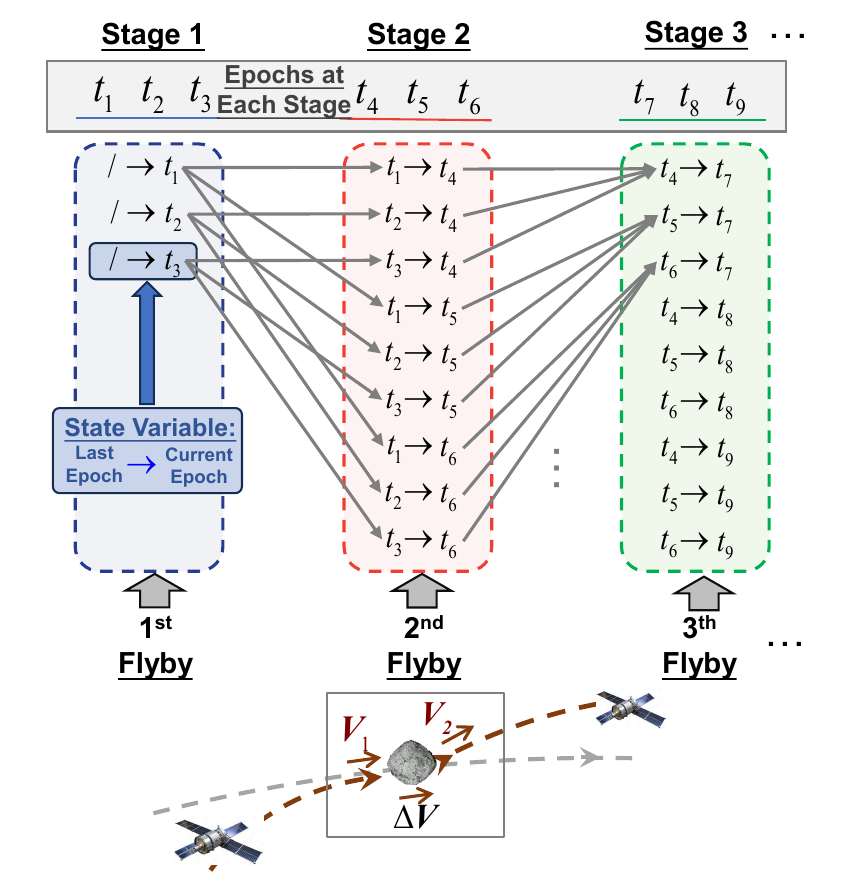}
	\caption{A schematic diagram of bi-impulse dynamic programming.}
	\label{fig:bi-impulse}
\end{figure}

This subsection introduces the bi-impulse scenario, where the dimensionality of the state variables can be further reduced. The bi-impulse scenario is defined by applying impulses only at the start and end points of the transfer trajectory without intermediate maneuvers. This scenario is widely considered in the preliminary design of deep-space missions \cite{Grigoriev2013,RUSSELL2023863}.
Specifically, a Lambert problem solver computes a transfer trajectory between the start and end points that satisfies the position conditions. The required velocity increment corresponds to the difference between the two impulses. In this case, the trajectory velocity is determined entirely by the Lambert problem solver, enabling the velocity to be excluded from the state variables and time alone to be retained.
Thus, the state variable dimensionality can be reduced to include only the current epoch and the last flyby epoch.
As a result, the state modeling of the problem simplifies into a two-dimensional state variable form, as illustrated in Fig.~\ref{fig:bi-impulse}. Furthermore, this simplification introduces time-matching characteristics that allow the computational complexity to be further reduced by leveraging these properties.

Following a derivation similar to Sec.~\ref{sec:low-thrust}, the number of discrete time points in each stage is $N_t$. In this case, the state variables, scaling as $N_t^2$ and representing the discrete time points for each stage, lead to an overall computational complexity of $O(N_t^4 N)$. As shown in Fig.~\ref{fig:bi-impulse}, the state variables include the last flyby epoch. Consequently, only the time points from the previous stage that align with the current stage's last flyby epoch are relevant.
For instance, to compute the optimal solution for the first state variable $(t_4, t_7)$ in Stage 3 of Fig.~\ref{fig:bi-impulse}, only the current time matches $t_4$ in Stage 2 are considered. State variables corresponding to other times are disregarded as they do not impact the current computation. This refinement reduces the computational complexity at this step from $N_t^2$ to $N_t$, lowering the total complexity to $O(N_t^3 N)$. To facilitate understanding of the algorithm design, a pseudocode is provided in Alg.~\ref{alg:bi-impulse}.

\begin{algorithm}
	\caption{Dynamic Programming for the Two-Impulse Multi-Flyby Problem}
	\label{alg:bi-impulse}
	\begin{algorithmic}[1]
		\REQUIRE Mission sequence
		\ENSURE Total $\Delta v$, optimal result path

		\STATE Initialize sequence length:
		$\mathbf{N} \leftarrow \text{length}(\text{sequence})$

		\STATE For the initial stage ($\mathbf{stage} = 0$), generate possible initial states based on feasible departure times:

		$\mathbf{dp\_states}[0] \leftarrow \left\{ \text{State}(t) \mid t \in \text{feasible departure times} \right\}$
		\FOR{$\mathbf{stage} = 1$ \TO $\mathbf{N} - 1$}
		\STATE Set departure body $\mathbf{Body}_{\text{departure}}$  and target body $\mathbf{Body}_{\text{target}}$
		\STATE Initialize current stage's state list: $\mathbf{dp\_states}[\mathbf{stage}] \leftarrow \emptyset$
		\STATE Determine feasible arrival times at $\mathbf{Body}_{\text{target}}$ based on time and mission constraints
		\FOR{\textbf{each} feasible arrival time $t$}
		\FOR{\textbf{each} $\mathbf{prev\_state}$ \textbf{in} $\mathbf{dp\_states}[\mathbf{stage} - 1]$}
		\STATE Compute the required $\Delta v_{\text{leg}}$ for this leg \label{alg:dp_impulse_dv}
		\STATE Compute cumulative total $\Delta v$:
		$\text{total\_dv} \leftarrow \mathbf{prev\_state}.\text{total\_dv} + \Delta v_{\text{leg}}$

		\IF{total\_dv is the smallest among those with the same prev\_state.$t$}
		\STATE Create new state $\mathbf{new\_state}$ with:
		\begin{itemize}
			\item Time $\mathbf{new\_state}.t \leftarrow t$
			\item Last time $\mathbf{new\_state}.{t_\text{last}} \leftarrow \mathbf{prev\_state}.t$
			\item Cumulative $\Delta v$ $\mathbf{new\_state}.\text{total\_dv} \leftarrow \text{total\_dv}$
			\item Predecessor state $\mathbf{new\_state}.\text{prev\_state} \leftarrow \mathbf{prev\_state} $
		\end{itemize}
		\STATE Add $\mathbf{new\_state}$ to $\mathbf{dp\_states}[\mathbf{stage}]$
		\ENDIF
		\ENDFOR
		\ENDFOR
		\ENDFOR
		\RETURN The optimal total $\Delta v$ and the optimal result path
	\end{algorithmic}
\end{algorithm}

The calculation of $\Delta {v}_\text{leg}$ in Alg.~\ref{alg:bi-impulse}, line 9, proceeds as follows: Assume the state at the previous stage is $\symbf{x}_{\text{prev}} = (t_{\text{last}}, t_{\text{departure}})$, and the state at the current stage is $\symbf{x}_{\text{current}} = (t_{\text{departure}}, t_{\text{target}})$. The celestial bodies involved at these three time points are $A_0$, $A_1$, and $A_2$, where $A_1$ and $A_2$ correspond to $\text{Body}_{\text{departure}}$ and $\text{Body}_{\text{target}}$ in Alg.~\ref{alg:bi-impulse}, respectively. The respective paths of the trajectory legs $A_0$--$A_1$ and $A_1$--$A_2$ can be determined using a Lambert problem solver. Let the velocities at $A_1$ for these two trajectory legs be $\symbf{v}_1$ (from $A_0$--$A_1$) and $\symbf{v}_2$ (from $A_1$--$A_2$). The $\Delta {v}_\text{leg}$ required is then given by:
\begin{equation}
	\label{eq:delta_v_calculation}
	\Delta {v}_\text{leg} = \| \symbf{v}_1 - \symbf{v}_2 \|.
\end{equation}

\subsubsection{Adaptive step technique}
\label{sec:adaptive_step_method}

This subsection introduces the adaptive step technique, which significantly accelerates the computation speed. Although it sacrifices the theoretical guarantee of global optimality, the subsequent case study demonstrates its practical effectiveness.

The adaptive step technique can be conceptually understood as an iterative process that performs progressively finer dynamic programming with smaller step sizes within a narrow “tube” of the previous trajectory.
The adaptive step technique begins by using a relatively large step size to obtain a rough trajectory. The step size is then reduced, and the algorithm is executed only around this trajectory.
This reduction in range ensures that, even with a smaller step size, the number of states is minimized, resulting in a fast computation speed. Iterating this process allows the step size to reach the desired threshold. However, the drawback of this method is that each iteration is conducted around the previous result. Consequently, the globally optimal solution is constrained by its localized nature, which may lead to local optima. To mitigate this risk, it is essential to maintain a relatively large area around the trajectory to achieve better results and avoid using excessively large step sizes during the initial iteration. Therefore, the two key parameters in the adaptive step technique are the initial step size and the range around the trajectory.

\section{Examples}
\label{sec:examples}

This section verifies the effectiveness of the proposed method through three scenarios: the GTOC4 impulsive problem, the GTOC11 problem, and the GTOC4 problem.
 All scenarios focus on the fuel-optimal trajectory design for multiple flybys.
The first scenario validates the impulsive problem and evaluates the performance of different step sizes and the adaptive step technique. The second scenario addresses a problem with constraints on the maximum magnitude of the flyby velocity and further analyzes the error upper bound. The third scenario focuses on low-thrust maneuvers, which involve the largest number of state variables.

The computational environment for all tests was as follows: The first two scenarios were executed on a personal laptop equipped with an AMD Ryzen 7 6800H CPU (base clock frequency ~3.2 GHz) with eight cores, 16 threads, and 16 GB of DDR5 memory. The laptop runs Windows 11, and the programs were developed in C++ using Visual Studio 2022. The code operates in serial mode without parallel computing.
%  The source code has been made publicly available on GitHub.\footnote{\url{https://github.com/zhong-zh15/Multi\_Flyby\_Dynamic\_Programming}}

For the third test scenario, cloud computing resources were utilized due to the significant computational demands. Neural networks were employed to perform low-thrust fuel consumption and reachability prediction calculations. The training procedure was conducted using Python, with PyTorch 2.0 and CUDA 11.7. Details of the training methods and model training processes are not elaborated on here as they are not central to this study. After model training, the model was imported into the C++ platform via PyTorch's C++ interface, LibTorch, and executed on a GPU to perform low-thrust fuel consumption and reachability prediction calculations. In addition to LibTorch, the authors developed all C++ software used in the third scenario. Computational resources included 2 NVIDIA RTX 3090 GPUs, each with 24 GB of VRAM, and a computer equipped with a 32-core AMD EPYC 7282 3.0 GHz CPU. The calculations were performed using OpenMP 5.0 for parallel processing. The software was executed on a Linux system and compiled with GCC 10.2.0.

\subsection{Case 1: The GTOC4 Impulsive Problem}

\begin{table}[htbp]
	\centering
	\textcolor{black}{
	\caption{Results of the GTOC4 impulsive problem with different step sizes.}
	\label{tab:step_effect_GTOC4}
	  \begin{tabular}{cccc|ccc}
	  \toprule
	  Step size, & \multicolumn{3}{c}{Fixed-step only} & \multicolumn{3}{c}{Fixed + adaptive-step refinment} \\
  \cmidrule{2-4}  \cmidrule{5-7}  days & CPU time, s & $\Delta v$, m/s & $m_{ f}$, kg & CPU time, s & $\Delta v$, m/s & $m_{ f}$, kg \\
	  \midrule
	  32 & 0.311  & 228,369.5  & 0.6  & 12.428  & 25,210.4  & 636.7  \\
	  16 & 2.212 & 120,761.0  & 24.7  & 13.817 & 25,210.4  & 636.7  \\
	  8 & 17.148  & 67,588.0  & 150.8  & 28.249  & 25,210.4  & 636.7  \\
	  4 & 133.492  & 39,620.2  & 390.1  & 145.330  & 25,210.4  & 636.7  \\
	  2 & 1,045.323  & 31,350.8  & 516.8  & 1,057.501 & 25,210.4  & 636.7  \\
	  1 & 8,396.453  & 27,002.8  & 599.1  & 8,411.756  & 25,210.4  & 636.7  \\
	  \bottomrule
	  \end{tabular}%
	  }
  \end{table}%

Case 1 focuses on the GTOC4 Impulsive Problem, which simplifies the original GTOC4 problem by using impulsive maneuvers applied only at flybys and rendezvous points, without any intermediate maneuvers. The original GTOC4 problem will be introduced in Sec.~\ref{sec:GTOC4_result}. The goal of Case 1 is to validate the effectiveness of our method for impulsive problems and to test the performance of different step sizes and the adaptive step method.

The GTOC4 impulsive problem was originally formulated by the GTOC4 champion, Moscow State University, as a simplified version during their preliminary design phase. During the GTOC4 competition, the winning team initially used bi-impulse maneuvers to approximate the global search for the flyby sequence, resulting in a preliminary trajectory design with 48 flybys and one rendezvous. Subsequently, they employed low-thrust maneuvers to refine the transfer trajectories, achieving the final winning solution with 44 flybys and one rendezvous \cite{Grigoriev2013}.

This paper further optimizes the fuel consumption based on the flyby sequence with 48 flybys obtained from the winning team's global search. To ensure a fair comparison, the same impulsive maneuver approach as used by the winning team is adopted, with changes in the mass and velocity as specified in Eq. \eqref{eq:impulse_dv}. During the calculation process, the $\Delta v$ for each stage is determined following Eq.~\eqref{eq:delta_v_calculation}. At the final stage, an additional velocity increment is required to rendezvous with the asteroid.

The influence of step sizes on the results was analyzed, and adaptive step techniques were performed. The findings are summarized in Table~\ref{tab:step_effect_GTOC4}.
The adaptive step technique involves computing the initial results with a fixed step size and then making adaptive adjustments based on these results. It functions as a supplementary process to the initial computation.

Table~\ref{tab:step_effect_GTOC4} reveals the following key observations:
\begin{itemize}
	\item First, in the fixed-step-only case, the computation time increases significantly as the step size decreases, consistent with the $O(N_t^3)$ growth predicted in Sec.~\ref{sec:adaptive_step_method}.
	\item Second, with adaptive step refinement, the final results converge consistently, demonstrating the robustness of this method. Moreover, additional computation time of the adaptive step procedure (i.e., the total time minus the fixed-step-only time) remains stable and minimal, making it suitable as a foundational algorithm for exploring other sequences.
	\item Finally, even with a step size reduced to one day, the fixed-step-only results show a notable deviation from the optimal values listed in the table. This suggests that the multi-flyby problem is highly nonlinear, characterized by numerous local optima, underscoring the need for advanced global optimization techniques. This observation is further validated in the subsequent case study.
\end{itemize}

\begin{table}[ht]
	\centering
	\caption{Detailed results of the GTOC4 impulsive problem in comparison with the winner's result\cite{Grigoriev2013}.}
	\begin{tabular}{ccccc|ccccc}
		\toprule
		    & Flyby     & Flyby time & Winner & Our      &     & Flyby     & Flyby time & Winner & Our      \\
		No. & body      & (MJD2000)  & mass, kg & mass, kg & No. & body      & (MJD2000)  & mass, kg & mass, kg \\
		\midrule
		1   & Earth     & 58,676.2   & 1,500.0  & 1,500.0  & 26  & 2005WK4   & 60,600.7   & 1,082.3  & 1,100.8  \\
		2   & 2006QV89  & 58,731.9   & 1,487.5  & 1,500.0  & 27  & 2004QJ13  & 60,665.5   & 1,073.8  & 1,095.4  \\
		3   & 2006XP4   & 58,801.2   & 1,455.5  & 1,488.7  & 28  & 2006UB17  & 60,797.0   & 1,064.1  & 1,087.1  \\
		4   & 2008EP6   & 58,867.1   & 1,447.4  & 1,454.7  & 29  & 1993FA1   & 60,898.4   & 1,047.9  & 1,078.1  \\
		5   & 2007KV2   & 58,974.1   & 1,433.9  & 1,447.1  & 30  & 2005EU2   & 60,938.4   & 1,032.0  & 1,060.2  \\
		6   & 2005XN27  & 59,084.5   & 1,415.3  & 1,434.4  & 31  & 2006EC    & 61,052.2   & 996.5    & 1,046.8  \\
		7   & 2006TB7   & 59,119.3   & 1,407.2  & 1,416.5  & 32  & 143527    & 61,082.6   & 979.8    & 1,010.2  \\
		8   & 2008AF4   & 59,222.5   & 1,369.7  & 1,406.3  & 33  & 2005GY8   & 61,109.7   & 960.4    & 994.7    \\
		9   & 2006HF6   & 59,326.9   & 1,362.4  & 1,371.8  & 34  & 199801    & 61,201.4   & 956.0    & 977.0    \\
		10  & 2003LW2   & 59,366.2   & 1,332.9  & 1,368.6  & 35  & 2005XW77  & 61,221.1   & 934.2    & 975.8    \\
		11  & 2008PK3   & 59,419.8   & 1,317.7  & 1,337.1  & 36  & 2008AP33  & 61,285.5   & 925.5    & 956.8    \\
		12  & 2007VL3   & 59,520.1   & 1,298.4  & 1,324.0  & 37  & 2004JN1   & 61,377.3   & 907.4    & 948.8    \\
		13  & 2006AN    & 59,586.1   & 1,289.1  & 1,305.0  & 38  & 2007US51  & 61,441.8   & 901.6    & 929.3    \\
		14  & 2006UQ216 & 59,685.1   & 1,287.8  & 1,296.2  & 39  & 2008QB    & 61,486.7   & 895.4    & 924.5    \\
		15  & 2006KV89  & 59,746.8   & 1,277.8  & 1,295.2  & 40  & 1995SA4   & 61,563.0   & 860.7    & 919.6    \\
		16  & 2004SA1   & 59,862.9   & 1,242.2  & 1,284.6  & 41  & 2006WR127 & 61,645.4   & 839.5    & 885.6    \\
		17  & 154276    & 59,901.2   & 1,227.9  & 1,248.8  & 42  & 2002CW11  & 61,727.3   & 826.3    & 862.3    \\
		18  & 2006AU3   & 59,955.5   & 1,219.2  & 1,235.9  & 43  & 2003WP25  & 61,807.3   & 785.4    & 849.2    \\
		19  & 2008GM2   & 60,079.5   & 1,189.1  & 1,230.3  & 44  & 2006BZ147 & 61,869.8   & 779.0    & 807.2    \\
		20  & 2008NA    & 60,156.8   & 1,172.0  & 1,197.7  & 45  & 2004RN111 & 61,922.2   & 767.9    & 799.6    \\
		21  & 2005CD69  & 60,233.3   & 1,142.6  & 1,183.6  & 46  & 2006VP13  & 61,980.4   & 763.8    & 793.8    \\
		22  & 2008KE6   & 60,334.8   & 1,126.1  & 1,152.5  & 47  & 2003AS42  & 62,055.5   & 723.4    & 785.1    \\
		23  & 2007VD184 & 60,422.9   & 1,119.3  & 1,137.3  & 48  & 2008RH1   & 62,105.6   & 705.8    & 741.1    \\
		24  & 2008EQ    & 60,507.2   & 1,101.9  & 1,130.4  & 49  & 2007HW4   & 62,151.6   & 703.2    & 729.1    \\
		25  & 2003YP3   & 60,550.1   & 1,093.4  & 1,121.0  & 50  & 2000SZ162 & 62,262.4   & 616.5    & 636.7    \\
		\bottomrule
	\end{tabular}%
	\label{tab:GTOC4_impulse_result}%
\end{table}%

In conclusion, the proposed method achieves fuel savings of 20.2 kg over the winning result, demonstrating its superior efficiency. Detailed results are provided in Table~\ref{tab:GTOC4_impulse_result}.

\subsection{Case 2: The GTOC11 Problem}

Case 2 validates that the proposed method can be extended to another impulsive GTOC problem, demonstrating its generality and robustness. It also illustrates how constraints on the flyby velocity can be handled and further tests the error upper bound.

The GTOC11 problem was proposed by the National University of Defense Technology and the Xi’an Satellite Control Center. One key objective of the GTOC11 problem is to design trajectories for ten motherships to fly by as many targets as possible while minimizing the mothership $\Delta v$. The thrust is modeled as impulsive, satisfying Eq.~(\ref{eq:impulse_dv}). The competition was won by Tsinghua University~\cite{ZHANG2023819}, and their solution will be used as the test case in Case 2.

The basic formulation of the GTOC11 problem is similar to that introduced in Sec.~\ref{sec:modeling}. However, there are additional constraints: the departure velocity relative to the Earth must not exceed 6 km/s, and the flyby velocity, i.e., the relative velocity between the mothership and the asteroid at the flyby epoch, must remain below 2 km/s. As the GTOC11 problem involves two phases and the design of the mothership trajectories constitutes the first phase, the flyby times for each asteroid must also comply with the constraints of the second phase. To decouple the problem, specific parameters from the second phase of the champion’s solution are used to compute a fixed time window. As a result, the modified results can replace the champion's mothership trajectories without violating any constraints of the GTOC11 problem.

The solving procedure for Case 2 is like that for Case 1, except that the flyby velocity is constrained. This prevents
direct computation of $\Delta v_{\text{leg}}$ using Eq.~\eqref{eq:delta_v_calculation}. Let $\symbf{v}_\text{flyby}$ represent the velocity at the flyby asteroid, while $\symbf{v}_1$ and $\symbf{v}_2$ denote the velocities before and after the flyby, respectively. The $\Delta v_\text{leg}$ for the GTOC11 problem can be calculated as
\begin{equation}
	\label{eq:delta_v_calculation_GTOC11}
	\Delta v = \min( \| \symbf{v}_1 - \symbf{v}_\text{flyby} \| + \| \symbf{v}_\text{flyby} - \symbf{v}_2 \|),
\end{equation}
where the velocity $\symbf{v}_\text{flyby}$ relative to the flyby body must not exceed 2 km/s, as required by the GTOC11 problem. Equation \eqref{eq:delta_v_calculation_GTOC11} can be analytically solved using the conic method described in \cite{ZHANG2023819}. This approach requires only $\symbf{v}_1$, $\symbf{v}_2$, and the maximum allowable flyby velocity. The state variables thus remain unchanged.

Based on the sequence obtained from the winning results, the fuel consumption was further optimized using an adaptive step technique with an initial step of 32 days. The results are summarized in Table~\ref{tab:GTOC11_result}. To ensure a fair comparison, the winning results presented here were derived using a bi-impulse scenario, consistent with the winning team’s global optimization method \cite{Zhang2024}.

The final result submitted by the winning team surpasses that in Table~\ref{tab:GTOC11_result}, as they performed extensive optimization for each leg of the mothership trajectory. This included optimizing bi-impulse maneuvers not applied at the flyby event points or introducing a third maneuver mid-course. To address potential concerns, we implemented these optimizations on our result, demonstrating that it could surpass the winning solution even with limited computational resources.

An analysis of Table~\ref{tab:GTOC11_result} reveals that the proposed method achieved superior results across all tasks. Notably, in Tasks 3 and 5, the velocity increment optimization was particularly significant, reaching over 600 m/s and further demonstrating the generality of the method. The ability to improve upon the winning results emphasizes the importance of global optimization for multi-flyby problems, given the prevalence of local optima in flyby trajectory design.

\begin{table}[htbp]
	\centering
	\begin{threeparttable}
		\caption{Results of the GTOC11 problem in comparison with the winner's result\cite{ZHANG2023819}.}
		\label{tab:GTOC11_result}
		\begin{tabular}{cccccc}
			\toprule
			Mission               & Sequence & \multicolumn{2}{c}{$\Delta v$, m/s} & {Reduced}          & {Computational }           \\
			\cmidrule{3-4}     ID & length   & Winner result\tnote{1}              & After optimization & $\Delta v$, m/s  & time, s \\
			\midrule
			1                     & 35       & 14,325.9                            & 13,944.8           & 381.1            & 4.467   \\
			2                     & 42       & 17,199.7                            & 17,131.1           & 68.6             & 4.501   \\
			3                     & 39       & 17,330.1                            & 16,704.1           & 626.0            & 4.21    \\
			4                     & 41       & 18,508.0                            & 18,351.0           & 156.9            & 4.487   \\
			5                     & 40       & 16,680.7                            & 16,033.9           & 646.8            & 3.915   \\
			6                     & 39       & 19,824.3                            & 19,721.8           & 102.4            & 3.57    \\
			7                     & 37       & 17,606.0                            & 17,352.9           & 253.2            & 3.398   \\
			8                     & 40       & 16,342.2                            & 16,331.0           & 11.2             & 3.552   \\
			9                     & 37       & 15,158.4                            & 15,025.1           & 133.3            & 3.334   \\
			10                    & 38       & 16,168.8                            & 16,126.6           & 42.1             & 3.448   \\
			\bottomrule
		\end{tabular}%
		\begin{tablenotes}
			\footnotesize
			\item[1] These values were calculated using the bi-impulse scenario.
		\end{tablenotes}
	\end{threeparttable}
\end{table}%

Next, we discuss the error bounds of this case. Since the bi-impulse method is employed, the impulsive velocity increment is calculated accurately. Thus, the error primarily arises from grid discretization due to time discretization. To evaluate this error, we conducted the following tests:
\begin{enumerate}
	\item Assumed a time step of $t_{\text{step}}$.
	\item Calculated the baseline value: We randomly generated three time points $t_0, t_1, t_2$ for the previous and next transfers, and calculated $\Delta v_{\text{leg}}$ using Eq. $\eqref{eq:delta_v_calculation_GTOC11}$. This result is considered the baseline.
	\item Calculated the approximated value: The time points $t_0, t_1, t_2$ were rounded up or down to the nearest time step, i.e., to the nearest integer multiple of $t_{\text{step}}$. The approximated $\Delta v_{\text{leg}}$ was then recalculated using Eq. $\eqref{eq:delta_v_calculation_GTOC11}$.
	\item Evaluated the error: The error was defined as the difference between the actual and baseline values, i.e., the difference between the two $\Delta v_{\text{leg}}$ calculations.
\end{enumerate}

We conducted the above tests for different $t_{\text{step}}$ values, and the results are shown in Table \ref{tab:GTOC11_error}. The transfer sequence is taken from Mission 1 of the GTOC11 winning team. For each transfer, 10,000 simulations were performed, and the mean absolute error and the maximum absolute error of the transfer data were recorded. As observed in Table~\ref{tab:GTOC11_error}, the error decreases as the step size decreases, which aligns with intuitive expectations.

\begin{table}[htbp]
	\centering
	\caption{Error statistics for the GTOC11 problem.}
	\begin{tabular}{ccc}
		\toprule
		Step size, days & Average error, m/s & Max error, m/s \\
		\midrule
		10              & 106.13             & 3,098.38       \\
		1               & 11.46              & 710.30         \\
		0.1             & 1.16               & 79.30          \\
		0.01            & 0.12               & 8.14           \\
		0.001           & 0.01               & 0.87           \\
		\bottomrule
	\end{tabular}%
	\label{tab:GTOC11_error}%
\end{table}%

To determine the upper bound of the final error in the GTOC11 results, a step size of 0.01 days was employed, requiring more computational resources and the refined filtering of infeasible states. The final results were almost identical to those obtained using adaptive step sizes, as shown in Table~\ref{tab:GTOC11_error}. This means that the error upper bound in our results is approximately $8\times 35 \approx 300$ m/s. Therefore, the trajectory for GTOC11 Mission 1 requires at least 13,640 m/s, providing a benchmark for future research. As previously described, this method establishes a general error upper bound based on the algorithm itself, marking a novel contribution in this domain.
Furthermore, the function of Table~\ref{tab:GTOC11_error} is not only to provide a forward estimation of the final error but also to allow for the determination of the required step size based on the desired accuracy of the results.

\subsection{Case 3: GTOC4 Problem}
\label{sec:GTOC4_result}

The purpose of Case 3 is to evaluate the effectiveness of the proposed method in low-thrust trajectory optimization problems, thereby verifying its general applicability across two thrust patterns. The test scenario is based on the original GTOC4 Problem.

The GTOC4 problem, proposed by the French National Center for Space Studies (CNES)
\footnote{Bertrand, R., Epenoy, R., and Meyssignac, B., “Problem Description for the 4th Global Trajectory Optimisation Competition,” 2009, \url{https://sophia.estec.esa.int/gtoc\_portal/wp-content/uploads/2012/11/gtoc4\_problem\_description.pdf} [retrieved 1 December 2024]},
 is focused on finding an optimal asteroid flyby sequence.  A spacecraft equipped with a low-thrust propulsion system departs from Earth, performs multiple flybys of asteroids, and ultimately rendezvouses with a final target. The primary objective is to maximize the number of flybys, and the secondary objective is to maximize the final spacecraft mass. When the spacecraft departs from Earth at the departure epoch \(t_s\), it satisfies bounded residual hyperbolic velocity constraints:
$
	\symbf{r}(t_s) - \symbf{R}_{\text{E}}(t_s) = \symbf{0}$, $\increment V_{\text{E}} < \increment V_{\text{E}}^{\text{max}}$, and $m(t_s) = m_0
$,
where \(\increment {V}_{\text{E}} = \sqrt{\increment u_{\text{E}}^2 + \increment v_{\text{E}}^2 + \increment w_{\text{E}}^2}\), with \(\increment u_{\text{E}} = u(t_s) - U_{\text{E}}(t_s)\), \(\increment v_{\text{E}} = v(t_s) - V_{\text{E}}(t_s)\), and \(\increment w_{\text{E}} = w(t_s) - W_{\text{E}}(t_s)\). The quantities \(\symbf{R}_{\text{E}}(t)\) and \(\symbf{V}_{\text{E}}(t)\) represent the position and velocity vectors of Earth at time \(t\), respectively.
At the end of the mission epoch \(t_f\), the spacecraft is required to rendezvous with a final asteroid with
$	\symbf{r}(t_f) = \symbf{R}_N(t_f)$ and
$	\symbf{v}(t_f) = \symbf{V}_N(t_f)$.
The total flight time and the final mass of the spacecraft are constrained as follows: $t_f - t_s \leq t_{\text{max}}$ and $m(t_f) \geq m_{\text{min}}$. The relevant parameters of the GTOC4 problem are summarized in Table~\ref{tab:GTOC4_parameters}.

\begin{table}[h]
	\centering
	\caption{Summary of parameters for the GTOC4 problem.}
	\label{tab:GTOC4_parameters}
	\begin{tabular}{ccc}
		\toprule
		Name                                     & Unit                      & Value                            \\
		\hline
		\(\mu\)                                  & \(\text{m}^3/\text{s}^2\) & \(1.32712440018 \times 10^{20}\) \\
		\(I_{\text{sp}}\)                        & \(\text{s}\)              & \(3000\)                         \\
		\(g_0\)                                  & \(\text{m/s}^2\)          & \(9.80665\)                      \\
		\(T_{\text{max}}\)                       & \(\text{N}\)              & \(0.135\)                        \\
		\(\increment V_{\text{E}}^{\text{max}}\) & \(\text{m/s}\)            & \(4000\)                         \\
		\(m_0\)                                  & \(\text{kg}\)             & \(1500\)                         \\
		\(t_{\text{max}}\)                       & \(\text{years}\)          & \(10\)                           \\
		\(m_{\text{min}}\)                       & \(\text{kg}\)             & \(500\)                          \\
		\bottomrule
	\end{tabular}
\end{table}

\begin{figure}[htb]
	\centering
	\includegraphics[width=0.7\linewidth]{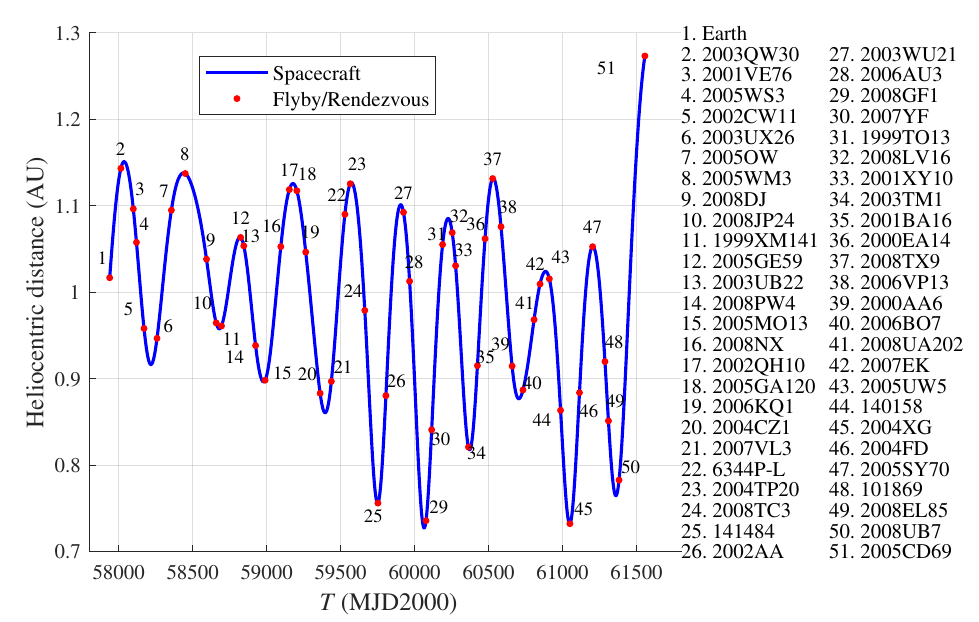}
	\caption{Heliocentric distance over time of the optimal solution for the GTOC4 problem.}
	\label{fig:ch4-figre_sundistance}
\end{figure}

\begin{figure}[htb]
	\centering
	\includegraphics[width=0.4\linewidth]{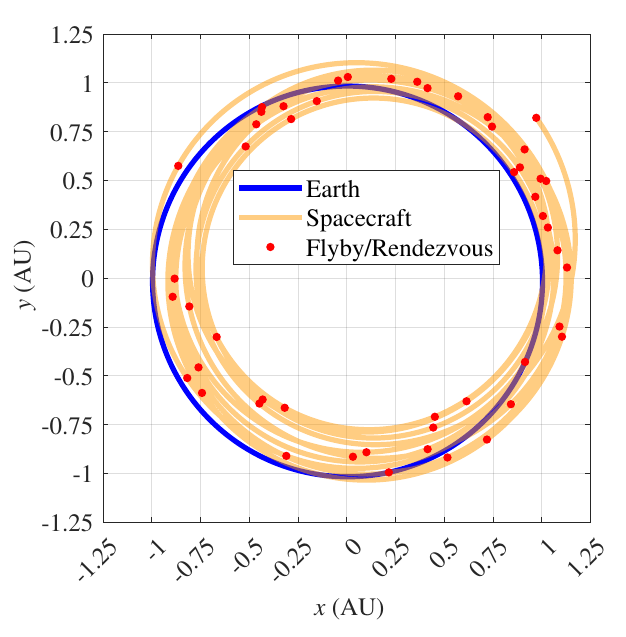}
	\caption{ The Optimal trajectory for the GTOC4 problem}
	\label{fig:ch4-figre_GTOC4_trajectory}
\end{figure}

In the GTOC4 ranking, the Moscow State University team achieved first place by performing 44 flybys and one rendezvous. Subsequently, the University of Jena team proposed an improved solution with 49 flybys and one rendezvous, representing the best-known result to date.\footnote{\url{https://www.youtube.com/watch?v=7QxikroB-6Q}}
This case study further optimizes fuel consumption based on the optimal sequence discovered by the University of Jena. During the proposed method simulation, the velocity step size was 50 m/s, the time step size was 0.2 days, the mass step size was 1 kg, and the adaptive step technique was enabled. During the calculation of the required velocity increments for transfers, a neural-network-based estimator and reachability prediction method were employed. As this aspect is unrelated to the main focus of this paper, it will be detailed in a separate article.

The total runtime of the simulation was 35.9 hours. After obtaining approximate global optimal flyby times and flyby velocities
using the velocity increment estimation and reachability prediction, an indirect method was employed to calculate the true fuel optimal transfer trajectories. The final optimization results showed a remaining 19.9 kg of fuel, surpassing the previously published results from the University of Jena and representing the currently known optimal solution. The variation of heliocentric distance over time and the two-dimensional planar trajectory of the final result are shown in Figs. \ref{fig:ch4-figre_sundistance} and \ref{fig:ch4-figre_GTOC4_trajectory}, respectively.

It should be noted that low-thrust reachability prediction inevitably has some errors. When dynamic programming selects an optimal path based on prediction results, it may be found that the result is not reachable when actually solving the optimal control problem. This implies a risk of actual non-reachability in low-thrust reachability prediction. Therefore, this study uses neural network prediction methods, and the results reflect the reachability probability. By artificially setting a high probability threshold, 
% such as the 85\% threshold set in this paper, 
the results can ensure a certain level of robustness, thus reducing the risk of infeasibility in the actual optimal control problem.

\begin{figure}[ht]
	\centering
	\includegraphics[width=0.55\linewidth]{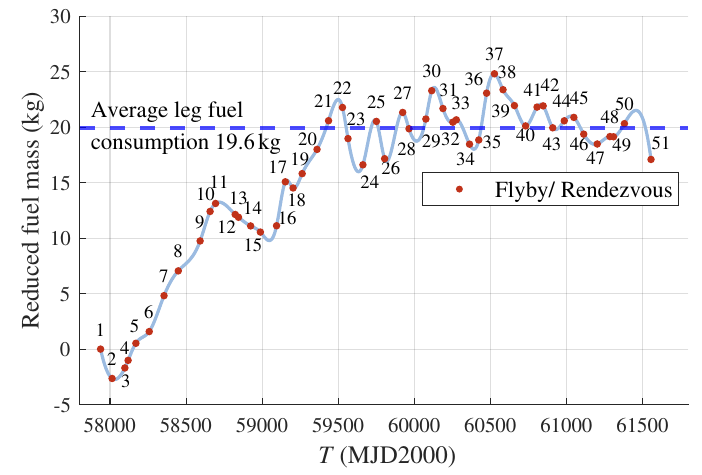}
	\caption{Fuel consumption improvement over the previous GTOC4 solution.}
	\label{fig:ch4-figure_GTOC4massrefined}
\end{figure}
Figure \ref{fig:ch4-figure_GTOC4massrefined} compares our result with the previous optimal solution. It is observed that the maximum fuel consumption is reduced by 25 kg compared to the original solution, surpassing the average leg fuel consumption of 19.6 kg. This suggests the possibility of solutions with additional flybys.

Furthermore, Fig.~\ref{fig:ch4-figure_GTOC4massrefined} shows that our result consumes more initial fuel than the previous solution. This gap often prevents local optimizers from identifying better solutions. This underscores the necessity of global optimization for such problems again and highlights the superiority of the proposed method. Detailed data from our final result can be found in Table \ref{tab:longtable}.

The error upper bound includes two primary error components introduced by the velocity increment estimator: grid discretization and estimator-induced errors. The testing procedure mirrors that of Case 2, where the true value is computed using the indirect method in the second step, and the approximate value is derived using a predictor in the third step. As a result, the estimated error upper bound for this problem is approximately $81 \times 51 \approx 4000$ m/s. This relatively high value is mainly due to the high dimensionality of state variables in low-thrust trajectory computations, which constrains further reduction of the discretization step size. As discussed in Sec.\ref{sec:error_upper_bound}, this error upper bound is conservative, and the actual error is likely smaller. The notable reduction in fuel consumption compared to the previous best-known result supports this argument.

% Table generated by Excel2LaTeX from sheet 'GTOC4小推力结果表格'
\begin{table}[ht]
	\centering
	\footnotesize
	\caption{Optimized results for the GTOC4 problem.}
	\label{tab:longtable}
	\begin{tabular}{ccccc|ccccc}
		\toprule
		    & Flyby     & Flyby time & Relative flyby          & Mass,   &     & Flyby     & Flyby time & Relative flyby           & Mass, \\
		No. & body      & (MJD2000)  & velocity, km/s          & kg      & No. & body      & (MJD2000)  & velocity, km/s           & kg    \\
		\midrule
		1   & Earth     & 57,938.2   & (0.694,-3.939,-0.025)   & 1,500.0 & 27  & 2003WU21  & 59,923.4   & (0.690,13.216,-11.099)   & 964.7 \\
		2   & 2003QW30  & 58,013.9   & (9.753,-1.418,4.826)    & 1,488.9 & 28  & 2006AU3   & 59,964.8   & (9.007,0.575,-1.634)     & 962.3 \\
		3   & 2001VE76  & 58,097.7   & (9.168,5.250,1.649)     & 1,462.7 & 29  & 2008GF1   & 60,075.7   & (-7.338,-10.823,1.467)   & 945.0 \\
		4   & 2005WS3   & 58,119.1   & (-11.193,1.532,-7.589)  & 1,456.8 & 30  & 2007YF    & 60,114.1   & (2.574,-6.319,0.009)     & 936.5 \\
		5   & 2002CW11  & 58,170.2   & (1.011,-9.159,-1.134)   & 1,439.5 & 31  & 1999TO13  & 60,187.9   & (-5.678,-5.190,-11.777)  & 908.4 \\
		6   & 2003UX26  & 58,257.8   & (-10.095,-8.307,2.846)  & 1,411.0 & 32  & 2008LV16  & 60,252.9   & (-1.489,-18.242,1.535)   & 883.0 \\
		7   & 2005OW    & 58,355.1   & (-1.076,-10.647,-0.439) & 1,385.9 & 33  & 2001XY10  & 60,275.8   & (-6.362,-13.365,-13.514) & 874.7 \\
		8   & 2005WM3   & 58,448.7   & (11.690,-1.369,0.267)   & 1,358.5 & 34  & 2003TM1   & 60,362.5   & (-15.269,-3.462,0.948)   & 840.8 \\
		9   & 2008DJ    & 58,592.3   & (15.920,-6.879,3.126)   & 1,309.7 & 35  & 2001BA16  & 60,423.2   & (0.828,-9.252,4.449)     & 822.4 \\
		10  & 2008JP24  & 58,657.8   & (4.638,4.967,1.190)     & 1,290.1 & 36  & 2000EA14  & 60,474.7   & (6.728,-7.389,1.176)     & 821.0 \\
		11  & 1999XM141 & 58,693.8   & (4.751,8.492,11.993)    & 1,277.2 & 37  & 2008TX9   & 60,526.3   & (11.577,-5.175,3.671)    & 804.5 \\
		12  & 2005GE59  & 58,822.9   & (14.096,-0.371,8.635)   & 1,227.6 & 38  & 2006VP13  & 60,582.6   & (1.555,-1.419,4.847)     & 782.2 \\
		13  & 2003UB22  & 58,844.4   & (-4.988,-5.108,7.899)   & 1,219.2 & 39  & 2000AA6   & 60,657.6   & (-7.278,10.842,0.660)    & 752.5 \\
		14  & 2008PW4   & 58,924.1   & (4.491,-8.064,-1.197)   & 1,188.3 & 40  & 2006BO7   & 60,731.1   & (1.251,5.736,0.159)      & 724.0 \\
		15  & 2005MO13  & 58,988.3   & (12.426,-3.602,-2.626)  & 1,166.3 & 41  & 2008UA202 & 60,806.0   & (0.241,-3.227,0.280)     & 707.6 \\
		16  & 2008NX    & 59,094.6   & (-2.306,-2.948,-1.110)  & 1,147.0 & 42  & 2007EK    & 60,846.3   & (-6.545,1.557,0.835)     & 700.8 \\
		17  & 2002QH10  & 59,152.2   & (7.030,-8.765,-3.757)   & 1,139.8 & 43  & 2005UW5   & 60,909.1   & (9.420,1.314,-2.057)     & 675.9 \\
		18  & 2005GA120 & 59,203.0   & (9.192,4.187,5.149)     & 1,137.5 & 44  & 140158    & 60,985.1   & (1.394,7.182,-1.639)     & 655.0 \\
		19  & 2006KQ1   & 59,262.9   & (2.219,-1.046,-4.639)   & 1,126.1 & 45  & 2004XG    & 61,048.1   & (-8.459,-6.237,-0.453)   & 632.0 \\
		20  & 2004CZ1   & 59,360.4   & (-4.440,4.348,-0.061)   & 1,092.9 & 46  & 2004FD    & 61,112.9   & (16.830,-23.171,-0.011)  & 616.2 \\
		21  & 2007VL3   & 59,436.1   & (-1.534,-12.798,-0.957) & 1,071.8 & 47  & 2005SY70  & 61,201.8   & (2.694,-9.228,-0.853)    & 606.1 \\
		22  & 6344P-L   & 59,527.6   & (15.789,-5.121,1.983)   & 1,050.8 & 48  & 101869    & 61,285.0   & (5.888,-22.068,-2.794)   & 591.7 \\
		23  & 2004TP20  & 59,564.0   & (8.454,5.919,11.466)    & 1,043.1 & 49  & 2008EL85  & 61,308.6   & (11.045,-2.645,-1.286)   & 590.1 \\
		24  & 2008TC3   & 59,662.0   & (8.640,-8.505,0.057)    & 1,009.5 & 50  & 2008UB7   & 61,380.3   & (11.313,12.717,-0.591)   & 581.7 \\
		25  & 141484    & 59,750.7   & (-3.490,-13.572,-8.079) & 990.8   & 51  & 2005CD69  & 61,555.0   & (0.000,0.000,-0.000)     & 519.9 \\
		26  & 2002AA    & 59,804.8   & (7.977,-10.817,-5.912)  & 986.6   &     &           &            &                          &       \\
		\bottomrule
	\end{tabular}%
	% \label{tab:addlabel}%
\end{table}%

\section{Conclusion}
\label{Sec:conclusion}

This paper has presented a method for achieving global optimality in the design of multi-flyby asteroid trajectories under a given sequence. By reformulating the original optimal control problem with intermediate equality constraints into a multi-stage decision problem, Bellman’s principle was employed to derive and prove global optimality. 
This reformulation simplifies the handling of complex trajectory constraints while providing provable error bounds, ensuring confidence in the approximate global optimum for flyby epochs and velocities. 
 The method accommodates both impulsive and low-thrust propulsion models and addresses mission constraints, such as limiting the magnitude of relative flyby velocities.
Empirical results underscore the versatility and effectiveness of the proposed method. Applied to three benchmark GTOC problems, it improves upon known best solutions, validating the approach and demonstrating its potential for trajectory design.

In summary, this work offers a computationally tractable method for global trajectory optimization in multi-flyby missions.
The foundation of this method lies in the exponential growth of computational capabilities described by Moore's Law. Tasks that were previously computationally infeasible have now become achievable, and future advancements in computing power will further highlight the advantages of this approach.
Looking ahead, this method can serve as a useful tool for mission designers seeking global optima in multi-asteroid exploration.

\section*{Appendix: Lemmas for the Proof of Proposition 1}
\label{Sec:appendix}
A mathematical lemma from the theory of dynamic programming for stochastic optimal control processes \cite{bertsekasDynamicProgrammingOptimal2012} is introduced to facilitate the proofs of Proposition 1.

\begin{lemma}
	\label{lemma:1}
	Let \(\symbf{f}:\mathscr{W} \to \mathscr{S}\) be a function, where \(\mathscr{W}\) is its domain and \(\mathscr{S}\) is its range. Let \(\mathscr{M}\) be the set of functions \(\symbf{\mu}:\mathscr{S} \to \mathscr{D}\), where \(\mathscr{W}\), \(\mathscr{S}\), and \(\mathscr{D}\) are different sets. Given any function \(G_0:\mathscr{W} \to (-\infty, \infty]\) and \(G_1:\mathscr{S} \times \mathscr{D} \to (-\infty, \infty]\), if for all \(\symbf{d} \in \mathscr{D}\), there exists \(\min\limits_{\symbf{d} \in \mathscr{D}} G_1(\symbf{f}(\symbf{w}), \symbf{d}) > -\infty\), then
	\[
		\min_{\symbf{\mu} \in \mathscr{M}} \max_{\symbf{w} \in \mathscr{W}} \left[ G_0(\symbf{w}) + G_1(\symbf{f}(\symbf{w}), \symbf{\mu}(\symbf{f}(\symbf{w}))) \right] = \max_{\symbf{w} \in \mathscr{W}} \left[ G_0(\symbf{w}) + \min_{\symbf{d} \in \mathscr{D}} G_1(\symbf{f}(\symbf{w}), \symbf{d}) \right].
	\]
\end{lemma}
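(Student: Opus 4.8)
The plan is to prove the identity by establishing the two inequalities ``$\ge$'' and ``$\le$'' separately. The ``$\ge$'' direction is an immediate pointwise estimate: for every $\symbf{\mu} \in \mathscr{M}$ and every $\symbf{w} \in \mathscr{W}$ one has $G_1(\symbf{f}(\symbf{w}), \symbf{\mu}(\symbf{f}(\symbf{w}))) \ge \min_{\symbf{d} \in \mathscr{D}} G_1(\symbf{f}(\symbf{w}), \symbf{d})$ by definition of the minimum; since $G_0(\symbf{w}) > -\infty$, adding $G_0(\symbf{w})$ preserves the inequality, and taking the supremum over $\symbf{w}$ gives $\max_{\symbf{w}}[G_0(\symbf{w}) + G_1(\symbf{f}(\symbf{w}), \symbf{\mu}(\symbf{f}(\symbf{w})))] \ge \max_{\symbf{w}}[G_0(\symbf{w}) + \min_{\symbf{d}} G_1(\symbf{f}(\symbf{w}), \symbf{d})]$. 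The right-hand side is independent of $\symbf{\mu}$, so taking the infimum over $\symbf{\mu} \in \mathscr{M}$ yields the first inequality.

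For the reverse inequality I would exploit the standing hypothesis that for each $\symbf{s}$ in the range of $\symbf{f}$ the value $\min_{\symbf{d} \in \mathscr{D}} G_1(\symbf{s}, \symbf{d})$ is attained and exceeds $-\infty$. This permits selecting, for each such $\symbf{s}$, a minimizer $\symbf{d}^*(\symbf{s})$ of $G_1(\symbf{s}, \cdot)$ over $\mathscr{D}$, and extending the assignment arbitrarily to the rest of $\mathscr{S}$ to obtain a bona fide map $\symbf{\mu}^* : \mathscr{S} \to \mathscr{D}$ in $\mathscr{M}$. By construction $G_1(\symbf{f}(\symbf{w}), \symbf{\mu}^*(\symbf{f}(\symbf{w}))) = \min_{\symbf{d}} G_1(\symbf{f}(\symbf{w}), \symbf{d})$ for every $\symbf{w}$, so $\max_{\symbf{w}}[G_0(\symbf{w}) + G_1(\symbf{f}(\symbf{w}), \symbf{\mu}^*(\symbf{f}(\symbf{w})))]$ equals the right-hand side exactly; bounding the infimum over $\symbf{\mu}$ by the value at this particular $\symbf{\mu}^*$ gives the ``$\le$'' direction. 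Combining the two inequalities gives equality, and since $\symbf{\mu}^*$ attains the outer infimum this also justifies writing it as a $\min$.

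The two inequalities are routine; the only point demanding care is the construction of $\symbf{\mu}^*$ — one must check that a pointwise minimizer exists for every relevant argument (this is precisely what the attainment hypothesis provides) and that these choices assemble into a single well-defined element of $\mathscr{M}$, and one must confirm the extended-real arithmetic stays unambiguous. The latter is secured by the assumptions $G_0 > -\infty$ and $\min_{\symbf{d}} G_1(\symbf{f}(\symbf{w}), \symbf{d}) > -\infty$, which rule out any $\infty - \infty$ indeterminacy in the sums and suprema. This selection-and-assembly step is the one I expect to be the main, albeit mild, obstacle.
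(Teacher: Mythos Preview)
Your proposal is correct and follows the same two-inequality skeleton as the paper. The ``$\ge$'' direction is identical. For ``$\le$'' you diverge slightly: you read the hypothesis that $\min_{\symbf{d}} G_1(\symbf{f}(\symbf{w}),\symbf{d})$ \emph{exists} as guaranteeing attainment, and construct an exact pointwise minimizer $\symbf{\mu}^*$; the paper instead runs a standard $\varepsilon$-approximation, choosing for each $\varepsilon>0$ a policy $\symbf{\mu}_\varepsilon$ with $G_1(\symbf{f}(\symbf{w}),\symbf{\mu}_\varepsilon(\symbf{f}(\symbf{w})))\le \min_{\symbf{d}}G_1(\symbf{f}(\symbf{w}),\symbf{d})+\varepsilon$ and then letting $\varepsilon\downarrow 0$. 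Your route is cleaner under the attainment reading and, as you note, it additionally certifies that the outer infimum over $\symbf{\mu}$ is achieved (hence is a genuine $\min$); the paper's $\varepsilon$-argument is the textbook Bertsekas form and would still go through if only an infimum were available over $\symbf{d}$, but by itself does not exhibit an optimal $\symbf{\mu}$.
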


\begin{proof}
	Consider any \(\symbf{\mu} \in \mathscr{M}\). For all \(\symbf{w} \in \mathscr{W}\), the following inequality holds:
	\[
		\max_{\symbf{w} \in \mathscr{W}} \left[ G_0(\symbf{w}) + G_1(\symbf{f}(\symbf{w}), \symbf{\mu}(\symbf{f}(\symbf{w}))) \right] \ge \max_{\symbf{w} \in \mathscr{W}} \left[ G_0(\symbf{w}) + \min_{\symbf{d} \in \mathscr{D}} G_1(\symbf{f}(\symbf{w}), \symbf{d}) \right].
	\]
	Thus, taking the minimum on the left-hand side of the above inequality still holds, that is,
	\[
		\min_{\symbf{\mu} \in \mathscr{M}} \max_{\symbf{w} \in \mathscr{W}} \left[ G_0(\symbf{w}) + G_1(\symbf{f}(\symbf{w}), \symbf{\mu}(\symbf{f}(\symbf{w}))) \right] \ge \max_{\symbf{w} \in \mathscr{W}} \left[ G_0(\symbf{w}) + \min_{\symbf{d} \in \mathscr{D}} G_1(\symbf{f}(\symbf{w}), \symbf{d}) \right].
	\]

	Next, we prove the inequality in the other direction, ``left-hand side \(\leq\) right-hand side'':

	For any positive number \(\varepsilon > 0\), there exists \({\symbf{\mu}_\varepsilon} \in \mathscr{M}\) such that for all \(\symbf{w} \in \mathscr{W}\), the following inequality holds:
	\[
		G_1\left( \symbf{f}(\symbf{w}), {\symbf{\mu}_\varepsilon}(\symbf{f}(\symbf{w})) \right) \le \min_{\symbf{d} \in \mathscr{D}} G_1(\symbf{f}(\symbf{w}), \symbf{d}) + \varepsilon.
	\]
	Therefore,
	\[
		\begin{aligned}
			 & \min_{\symbf{\mu} \in \mathscr{M}} \max_{\symbf{w} \in \mathscr{W}} \left[ G_0(\symbf{w}) + G_1(\symbf{f}(\symbf{w}), \symbf{\mu}(\symbf{f}(\symbf{w}))) \right]   \\
			 & \quad \le \max_{\symbf{w} \in \mathscr{W}} \left[ G_0(\symbf{w}) + G_1\left( \symbf{f}(\symbf{w}), {\symbf{\mu}_\varepsilon}(\symbf{f}(\symbf{w})) \right) \right] \\
			 & \quad \le \max_{\symbf{w} \in \mathscr{W}} \left[ G_0(\symbf{w}) + \min_{\symbf{d} \in \mathscr{D}} G_1(\symbf{f}(\symbf{w}), \symbf{d}) \right] + \varepsilon.
		\end{aligned}
	\]
	Since \(\varepsilon\) can be arbitrarily small, this proves the inequality.
\end{proof}

\section*{Acknowledgments}
This paper was supported by the National Natural Science Foundation of China (Grant No. 12372047, No. 62227901).

\section*{Codes and Data Availability Statement} 

The codes and data associated with this paper are available on GitHub at \url{https://github.com/zhong-zh15/Multi\_Flyby\_Dynamic\_Programming}.

\bibliography{sample_EDITED}

\end{document}